\documentclass[12pt,reqno]{article}
\usepackage[usenames]{color}
\usepackage{amssymb}
\usepackage{amsmath}
\usepackage{amsthm}
\usepackage{amsfonts}
\usepackage{amscd}
\usepackage{graphicx}
\usepackage[colorlinks=true,linkcolor=webgreen,filecolor=webbrown,citecolor=webgreen]{hyperref}
\definecolor{webgreen}{rgb}{0,.5,0}
\definecolor{webbrown}{rgb}{.6,0,0}
\usepackage{color}
\usepackage{fullpage}
\usepackage{float}
\usepackage{graphics}
\usepackage{latexsym}
\usepackage{mathrsfs}
\usepackage{epsf}
\usepackage{breakurl}
\usepackage{mathtools}

\usepackage{array}  
\usepackage{caption}

\setlength{\textwidth}{6.5in}
\setlength{\oddsidemargin}{.1in}
\setlength{\evensidemargin}{.1in}
\setlength{\topmargin}{-.1in}
\setlength{\textheight}{8.4in}

\theoremstyle{plain}
\newtheorem{theorem}{Theorem}

\newtheorem{lemma}[theorem]{Lemma}
\newtheorem{proposition}[theorem]{Proposition}

\theoremstyle{definition}
\newtheorem{definition}[theorem]{Definition}
\newtheorem{example}[theorem]{Example}
\newtheorem{conjecture}[theorem]{Conjecture}

\theoremstyle{remark}
\newtheorem{remark}[theorem]{Remark}
\newtheorem{observation}[theorem]{Observation}

\def\inv#1{\frac{1}{#1}}
\def\ie{{\rm i.e.,\/}\ }
\def\?{\,\buildrel ?\over =\,} 

\newcommand{\seqnum}[1]{\href{https://oeis.org/#1}{\rm \underline{#1}}}

\newcolumntype{L}{>{$}l<{$}} 
\newcolumntype{C}{>{$}c<{$}} 

\begin{document}


\begin{center}
\vskip 1cm{\LARGE\bf Counting Partitions by Genus:\\[8pt]
 a Compendium of Results}
\vskip 1cm
\large
Robert Coquereaux\\
Aix Marseille Universit\'e\\
Universit\'e de Toulon, CNRS, CPT\\
F-13284 Marseille Cedex 09\\
France\\
\href{mailto:email}{\tt robert.coquereaux@gmail.com} \\
\ \\
 Jean-Bernard Zuber\\
 Sorbonne Universit\'e, CNRS \\
 Laboratoire de Physique Th\'eorique et Hautes Energies, LPTHE \\
 F-75252 Paris \\
 France\\
 \href{mailto:email}{\tt zuber@lpthe.jussieu.fr}
\end{center}

\vskip .2 in
\begin{abstract}
We study the enumeration of set partitions, according to their length, number of parts, cyclic type, and genus.
We introduce genus-dependent Bell,  Stirling numbers, and Fa\`a di Bruno coefficients. 
Besides attempting to summarize what is already known on the subject, we obtain new generic results (in particular for partitions into two parts, for arbitrary genus),
 and present computer generated new data extending the number of terms known for sequences or families of such coefficients; 
  this also leads to new conjectures.
\end{abstract}

\section{Introduction}
This is the second paper in a series devoted to the combinatorics of set partitions and 
their enumeration according to their genus. In a previous 
paper~\cite{Z23}, functional equations were written between generating functions (G.F.) of 
partitions, enabling one to count partitions in genus 0, 1 and 2. In the present paper, which is
completely independent, our goal is different. We wish to collect as much  data as possible
on that combinatorics. Accordingly, our paper gathers classical and known results as 
well as new data, obtained by computer ``brute force'' calculations and a few exact new
results. In many cases, these data suggest conjectures and extrapolations, that we mark 
with the sign $\?$. 

This endeavor has benefited in a tremendous way from the existence of the 
{\it On-Line Encyclopedia of Integer Sequences} (OEIS)~\cite{OEIS}. Several unexpected connections and identifications have been made
possible thanks to this irreplaceable and unique source.

Our paper is organized as follows. In  Section \ref{sect2}, we recall some basic definitions: 
 total numbers of partitions are given by  Bell  numbers, and  by Stirling numbers when the number of parts
 is fixed. The key notion of genus is also recalled.  
In Section \ref{sect3},  the Bell numbers are refined by fixing the genus of partitions, and 
by including partitions with or without singletons. 
Explicit expressions for their G.F. are given in genus 0 to 2, conjectured in genus 3, and the general form is discussed
 in higher genus. The same steps are repeated in Section~\ref{sect4} for the Stirling numbers,
with again exact or conjectured results for their counting and G.F.
The rest of the paper is devoted to the counting of partitions of given cyclic type.
In Section~\ref{sect5}, we review three families of partitions for which this counting is generically known:
the famous non-crossing partitions (\ie of genus 0); {the partitions of genus 1 and 2, 
 \cite{Z23};} the partitions into pairs, \ie of type $[2^k]$,
and arbitrary genus; and the partitions into two parts, for which we obtain a result  in arbitrary genus, which is new, 
to the best of our knowledge. Section~\ref{sect6} gathers data on various types of partitions
 for which we have only partial results and conjectures: types $[p^k]$ for 
varying $p$ or $k$ and three-part partitions.
Finally, the tables in the appendix  contain the number of partitions of the set $\{1,\ldots,n\}$ of arbitrary 
genus up to $n=15$. 

To put the present work in perspective, let us recall some background. For a long time, the census of partitions according to their genus 
has been confined to two 
particular cases. 

On the one hand, non-crossing partitions, \ie of genus 0 in the current approach, and  of arbitrary type,
 have been enumerated by Kreweras \cite{Krew}. His result reappeared in the context of large size matrix integrals and their ``planar''  limit \cite{BIPZ}:
 there, connected correlation functions (also known as cumulants) were in one-to-one correspondence with  non-crossing partitions.
The  generating function of the latter
was shown to satisfy a remarkably simple functional equation, from which Kreweras 
 result followed by use of Lagrange inversion formula. That functional equation in turn received a simple diagrammatic interpretation by Cvitanovic  \cite{Cvitanovic}.
 Non-crossing cumulants associated with non-crossing partitions then appeared in the framework of free probability in the work of Speicher \cite{Speicher}.

 On the other hand, and in a totally independent vein, the special class of partitions of a set of even cardinality into pairs and 
 of arbitrary genus was treated by Walsh and Lehman~\cite{WL1}, 
 and then by Harer-Zagier.  See more references in Section \ref{sec:HZ}. 
 It is only recently that  general partitions of higher genus were reconsidered by Cori and Hetyei \cite{CoriH13, CoriH17}, and results on
 the counting of partitions with a given number of parts and genus 1 and 2 were obtained. Their method relies on a reduction of diagrams
 to simpler ``irreducible''  ones, with the same genus,  which they proved to be in finite number for given genus. They were able 
 to list these irreducible diagrams for genus 1 and 2. 
 This enabled them to determine the G.F. of partitions with a given number of parts, for genus 1 and 2.  See (\ref{CHg1})-(\ref{CHresult}). 
 This method was pushed one step further
 in \cite{Z23}, using the same  reduction to irreducible diagrams, followed by a reconstruction of all partitions 
 from those irreducible diagrams by use of functional 
 identities generalizing Cvitanovic's one in higher genus. That method is limited in genus by the increasing complexity in 
 first listing all irreducible diagrams in genus greater or equal to 3 and then in ``dressing''  them. 
 
 As this short review shows, there is ample room for 
 further progress and we hope the present work will stimulate the curiosity and imagination of some readers.

\section{Bell, Stirling, and Fa\`a di Bruno numbers}
\label{sect2}
\subsection{Equivalence relations on a set with \texorpdfstring{$n$}{n} elements}

Any equivalence relation on a set is specified by a partition $\alpha$ of this set (or set-partition, for short), 
{and conversely, an equivalence relation defines  a partition.} 

The number of equivalence relations on a set with $n$ elements is given by the Bell numbers, which obey the recurrence:
\begin{equation} B_{n+1} = \sum_{p=0}^n  \binom {n} {p}  B_p, \; \text{with} \; B_1=1 ,\end{equation}

hence \begin{equation} B_n=\inv{e} \sum_{\ell=0}^\infty \frac{\ell^n}{\ell!}, \qquad \text{OEIS sequence \seqnum{A000110}} 
.\end{equation}

The exponential generating function of the Bell numbers is
\begin{equation} {\mathcal B}(x) = \sum_{n=0}^\infty \frac{B_n}{n!} x^n = e^{e^x-1}. \end{equation}

\subsection{Equivalence relations on a set with \texorpdfstring{$n$}{n} elements, with \texorpdfstring{$k$}{k} equivalence classes}

One may impose that the equivalence relations have a given number, say $k$, of equivalence classes,
{\ie that the partition has $k$ parts}.

The number of such relations is given by the Stirling number of the second kind, $S_{n,k}$, which obeys the recurrence relation: 
\begin{equation} S_{n, k} = k \, S_{n-1, k} + S_{n-1, k-1},\; n > 1, \; \text{with} \;   S_{1, k} = 0, \, k > 1, \;\text{and}\;  S_{1, 1} = 1.\end{equation}

An explicit form is
\begin{equation} S_{n, k} = \frac{1}{k!} \, \sum_{s=0}^k (-1)^{k-s}  {\binom {k}  {s}} \, s^n,   \qquad {\text{OEIS sequence \seqnum{A008277}}}.\end{equation}

The exponential generating function of the Stirling numbers $S_{n,k}$  is 
\begin{equation} {\mathcal S}(x,y) = e^{y \, (e^x-1)} \, \end{equation}
{with the following ``sum rule'' expressing their sum over $k$ as a Bell number,}
\begin{equation} B_n = \sum_{k=0}^n S_{n,k}.\end{equation}

\subsection{Set with $n$ elements, with $k$ equivalence classes of specified cardinalities}

One may further impose that the chosen equivalences classes have specified cardinalities.

The set of  cardinalities of the classes of the  equivalence relation defined by the set-partition $\alpha$
 determines a partition $[\alpha]\vdash n$ of the {\it integer} $n$,  called the {\it type} of the partition.
It is usual to denote this integer partition as follows:  $[\alpha]=[1^{\alpha_1}, \ldots, n^{\alpha_n}]$.
 It can be represented as a Ferrers diagram or as a Young diagram.
The number of equivalence relations on a set with $n$ elements having equivalent classes with cardinalities specified by $[\alpha]$ will be denoted by $C_{n, [\alpha]}$.
These numbers are sometimes called the Fa\`a di Bruno coefficients.

  \begin{equation}\label{faadiBruno}C_{n,[\alpha]}=\frac{n!}{\prod_{\ell=1}^n \alpha_\ell!  (\ell!)^{\alpha_\ell}}. \end{equation}

Sum rule: calling 
{$|\alpha|=\sum_\ell \alpha_\ell$ } the number of parts of the integer partition $[\alpha]$, we have obviously
\begin{equation} \sum_{\substack { [\alpha] \\  |\alpha|=k} }\,  C_{n, [\alpha]} = S_{n,k}. \end{equation}

 \subsection{Genus of partitions on a cyclically ordered set} 
 \label{sectdefgenus}
If the underlying set of $n$ elements is totally ordered (for definiteness one may take it as $\{1,2,3,\ldots,n\}$), or if it is cyclically ordered, one may introduce a new structure, finer than the ones already considered, by determining the genus of set partitions (a non-negative integer).
 With $\alpha$ a  partition of $\{1,2,3,\ldots,n\}$, we associate  a {\it permutation}  $\tau$ of ${\mathcal S}_n$: its cycles 
are the parts of $\alpha$, {\it with the important constraint that their elements are  in  increasing order}. We also consider the cyclic permutation
$ \sigma:= (1,2, \ldots,n)$. Then following \cite{Jacques, WL1, WL2}, the genus $g(\alpha)$ is defined by
\begin{equation}\label{defgenus}  n+2-2g=  \#\mathrm{cy}(\tau)+\#\mathrm{cy}(\sigma) + \#\mathrm{cy}(\sigma\circ \tau^{-1})\,\end{equation}
or in the present case, 
\begin{equation}\label{genus} -2g =|\alpha| -1 - n  + \#\mathrm{cy}(\sigma\circ \tau^{-1}) \end{equation}
since here $\#\mathrm{cy}(\sigma) =1$ and  $\#\mathrm{cy}(\tau)= \sum \alpha_\ell=|\alpha|$.

For the reader's convenience, we recall briefly the diagrammatic 
representation of partitions, that makes  the topological 
interpretation of  the definition (\ref{genus}) more transparent.

\begin{figure}\begin{center}
{\includegraphics[width=.9\textwidth]{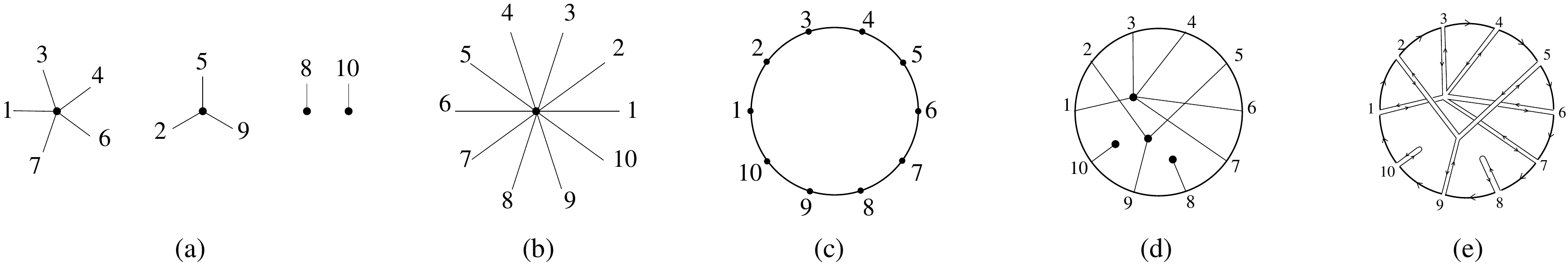}}
\end{center} 
\caption{The  partition $(\{1,3,4,6,7\},\{2,5,9\},\{8\},\{10\})$ of $\{1,\ldots,10\}$.   (a) the four $\ell$-vertices;  (b) and (c): two equivalent representations of the special 10-vertex; (d) a contribution to $C^{(g)}_{10, [1^2\,3 \, 5]}$; (e)  the double line (fat graph) version of (d), with  three faces and thus genus $g=2$.}
\label{Fig1}
\end{figure}

To a given partition, we may also attach a map: it has  $\alpha_\ell$  $\ell$-valent vertices, in short {\it $\ell$-vertices}  
whose edges are numbered clockwise by the elements of the partition, (see Fig.~\ref{Fig1}a),  and
a special $n$-vertex, with its $n$ edges numbered {\it anti}-clockwise from 1 to $n$. See.~\ref{Fig1}b. 
Edges are connected pairwise by matching their indices.
 Two maps are regarded as topologically equivalent if they encode the same partition. In fact
it is topologically equivalent and more handy to attach  $n$ points {\it clockwise}  on a circle, and to connect them pairwise by arcs of the circle. See Fig.~\ref{Fig1}b. 
Now the permutation $\sigma$ describes the connectivity of the $n$ points on the circle, while $\tau$ describes how these points are
connected through the $\ell$-vertices  (drawn inside the disk). It is readily seen that the permutation $\sigma\circ\tau^{-1}$ describes the circuits bounding clockwise
the faces of the map. This is even more clearly seen if one adopts a double line notation for each edge \cite{tH}, thus transforming 
the map into a ``fat graph''. See Fig.~\ref{Fig1}e.
 Consecutive entries of the cycles of $\sigma\circ\tau^{-1}$ label the tails of those oriented edges that are drawn going  inward: in this way one can read the entries of $(\{1, 8, 9, 6, 5, 3, 2, 10\},\{4\},\{7\})$ by following the three circuits of Fig.~\ref{Fig1}e.
Thus the number of cycles of $\sigma\circ\tau^{-1}$  is the number $f$ of faces of the map. 
Since each face is homeomorphic to a disk, gluing a disk to each face transforms the map into a closed Riemann surface, to which we may apply Euler's formula

\begin{equation}\label{Euler}2 -2g = \#(\mathrm{vertices})-\#(\mathrm{edges})+\#(\mathrm{faces})= 1+\sum_\ell \alpha_\ell - n +f\end{equation}
with $f=\#\mathrm{cy}(\sigma\circ \tau^{-1})$,  and we have reproduced (\ref{genus}). 
{In other words, $g$ is the minimal genus of the surface on which the map  induced by  the partition may be
drawn without crossings.}
 
Since $f$ is a positive integer,
and letting $k=|\alpha|$ denote the number of parts of the integer partition $[\alpha]$, we have
\begin{equation} \label{gversusface} g =\frac{n-k+1-f}{2}\le \frac{(n-k)}{2}. \end{equation}
Also note that one-part partitions ($k=1$) necessarily have genus 0. Hence for $g>0$, $k\ge 2$ and
\begin{equation}\label{boundonng}n\ge 2g+k \ge 2g+2.\end{equation}

\begin{remark}
Assuming the existence of an order on the underlying set is not really a restriction, since one can always choose one.
Each family of set-partitions (or equivalence relations) previously considered will be itself decomposed according to the genus, and we shall introduce notation 
$B_{n}^{(g)}$, $S_{n, k}^{(g)}$ and $C_{n, [\alpha]}^{(g)}$, with
\begin{equation} \sum_g B_{n}^{(g)}=B_{n}, \; \sum_g S_{n, k}^{(g)}=S_{n, k}, \ \text{and} \ \sum_g C_{n, [\alpha]}^{(g)}  =C_{n, [\alpha]}. \end{equation}
\end{remark}

\begin{remark}
Let us emphasize again that the maps associated with set partitions are subject to the important constraint that 
their vertices (= parts) are ordered. Accordingly, the edges connecting each $\ell$-vertex to the $n$-vertex cannot cross
one another, thus respecting their original cyclicity  and ordering. Only crossings of edges originating from distinct vertices are allowed. 
It is that constraint that makes the census of partitions difficult.
\end{remark}

\begin{remark}
Tables and conjectures follow from computer calculations, using Mathematica.
From a computational point of view, tables giving the number of set partitions for a set of $n$ elements with given genus, possibly obeying some constraints, can be obtained from the following simple algorithm:
\begin{itemize}
\item[(1)] Generate all partitions of this set. 
\item[(2)] Compute their genus, using formula (\ref{defgenus}).
\item[(3)] Select those partitions that obey the chosen particular constraints (given number of parts, specific cyclic type, absence of singletons, \dots). 
\end{itemize}
This method is simple enough to implement, and we could follow the above steps using set partitions generated by Mathematica, for ``small'' values of $n$; we also used a predefined command that can generate lists containing all partitions with given number of blocks (parts).

Unfortunately,  the large number of generated partitions makes the method intractable when $n$ increases. 

A first simplification consists in focusing on partitions without singletons, since the others (or their number) can be simply obtained from the former. 

However, when $n$  is too large, typically $n >12$, the amount of RAM required to hold all these partitions exhausts the possibilities of a typical laptop, even if one restricts his attention to partitions without singletons; moreover, using virtual memory techniques slows down considerably the calculations. In order to handle higher values of $n$ we had to write programs (in Mathematica) that build partitions sequentially, \ie one at a time rather than in long lists, sometimes generating only those of a given cyclic type, then saving those partitions, or only their number, only if they obey some chosen criteria (according to their genus for instance). 

Another method that we used for such higher values of $n$ was to save large lists of partitions into external files and subsequently use stream processing techniques (manipulating pointers) to process each record one at a time. 
\end{remark}

It is  important to notice that, unfortunately, we could not devise an efficient algorithm that would have allowed us to generate directly all the partitions that have a specific genus.

In the following, we shall be using both representations, by pairs of permutations or by fat graphs, in turn or in parallel.

\subsection{Partitions with no singletons}
If a set partition has no singleton, its associated equivalence relation is such that no element is isolated. 
Equivalently, each part of the partition contains at least $2$ elements.
By adding the constraint that the families of partitions considered previously should have no singleton one can define ``associated''\footnote{More generally, one could introduce $s$-associated Bell or Stirling numbers by imposing that each part contains at least $s$ elements but in the present paper we consider only the case  $s=2$. 
See \cite{Caicedo-etal}.} Bell numbers $\widehat B_n$, and ``associated'' Stirling numbers (of the second kind)  $\widehat S_{n,k}$. Obviously,
\begin{equation} \sum_{k=1}^n \widehat S_{n,k} = \widehat B_n.\end{equation}
Since the notation $C_{n, [\alpha]}$ already incorporates the partition type, there is no ``hat'' version of the Fa\`a di Bruno coefficients: either $[\alpha]$ contains singletons, or it does not.

\subsubsection{Associated Bell number $\widehat B_n$} See OEIS sequence \seqnum{A000296}.

Their exponential generating function is 
\begin{equation}\widehat {\mathcal B}(x) = e^{e^x - x - 1}.\end{equation}
Let us mention the following identities:
\begin{align} B_n &= \widehat B_n + \widehat B_{n+1}  \\
 \widehat B_n&= \sum_{j=0}^{n-2} (-1)^j \, B_{n-1-j}. \end{align}
 {Using generating functions, the first relation is a  consequence of the following equality~:
$ \frac{d}{dz} \exp (e^z-z-1)=\exp (e^z-1)-\exp (e^z-z-1)$.
 Both relations are mentioned in the OEIS in sequence \seqnum{A000296}.}

\subsubsection{Associated Stirling numbers of the second kind $ \widehat S_{n,k}$} See OEIS sequence \seqnum{A008299} (also see \seqnum{A134991} where they are called Ward numbers). 
 \begin{equation} \widehat S_{n,k} = \sum_{\ell=0}^{k} (-1)^\ell  \, {\binom {n} {\ell}} \, S_{n-\ell,k-\ell} \label{WardFromStirling} \end{equation} 
Conversely,
\begin{equation} S_{n,k} = \sum_{\ell = 0}^{k-1} \, {\binom {n}  {\ell}}  \, \widehat S_{n-\ell,k-\ell} \label{StirlingFromWard} \end{equation}
Their exponential generating function is
\begin{equation} \widehat {\mathcal S}(x,y) = e^{y \, (e^x-x-1)} .\end{equation}
They can be expressed {(see OEIS sequence \seqnum{A008299})} in terms of the second-order Eulerian numbers $E^{(2)}$, as defined in \cite[p.~256]{GKP}, by 
\begin{equation} \widehat S_{n,k} = \sum_{\ell = 0}^{n-k} \, \binom{\ell} {n-2k} \, E^{(2)}_{n-k, n-k-\ell} \label{WardFromEulerian}  \end{equation}
The $E^{(2)}$'s can themselves be expressed in terms of Stirling numbers of the second kind {(see OEIS sequence \seqnum{A340556})} by 
\begin{equation} E^{(2)}_{n,k} = \sum_{j=0}^k \, (-1)^{k - j}  {\binom {2n+1}  {k-j}}  \ S_{n + j, j}  \label{EulerianFromStirling}  \end{equation}
From (\ref{WardFromEulerian}) and (\ref{EulerianFromStirling}) one can recover (\ref{WardFromStirling}).

\bigskip

Notice that
\begin{equation} C_{n,[ 1^r,\, \alpha^\prime]}= {\binom {n}  {r}} \, C_{n-r,[\alpha^\prime]},\end{equation}
 where $\alpha'$ has no singleton.

One can also impose a genus restriction on the partitions without singletons, 
and as singletons do not affect the genus, one is therefore led to consider numbers  $\widehat B_n^{(g)}$ and $\widehat S_{n,k}^{(g)}$ with, of course $\sum_g \widehat B_n^{(g)}=\widehat B_n$ and $\sum_g \widehat S_{n,k}^{(g)} = \widehat S_{n,k}$. 
Moreover
 \begin{equation}\label{with/osingle} C_{n,[ 1^r,\, \alpha^\prime]}^{(g)}= {\binom {n}  {r}} \, C_{n-r,[\alpha^\prime]}^{(g)}. \end{equation}
We shall return to these sequences in the next section.

\section{Genus-dependent Bell numbers $B_{n}^{(g)}$}
\label{sect3}
\subsection{Unconstrained partitions: basic numbers $B_{n}^{(g)}$}

\subsubsection{Genus $0$} Known as  Catalan numbers.
See OEIS sequence \seqnum{A000108}.
\begin{align}\label{Catalan}B_{n}^{(0)}  =  {\mathcal C}_n &:= \frac{1}{(n+1)!} \, \frac{(2 n)!}{n!}\\
  &= \nonumber \{1,2,5,14,42, 132, 429, 1430, 4862, 16796, \ldots\}\end{align}
The ordinary G.F. is  
\begin{equation}B^{(0)}(x)=\frac{1-\sqrt{1-4x}}{2x} .
\end{equation}

\subsubsection{Genus $1$} See OEIS sequence \seqnum{A002802}.  We have
\begin{equation} B_{n}^{(1)} =\frac{1}{2^4\, 3} \, \frac{1}{(2 n-3) (2 n-1)} \, \frac{1}{ (n-4)!} \, \frac{(2 n)!}{n!}, \end{equation}
The ordinary G.F. is
\begin{equation}  {B^{(1)}(x)=}\frac{x^4}{(1-4 x)^{5/2}} . \qquad \text{See \cite{CoriH17}} .\end{equation}

\subsubsection{Genus  $2$} The first few terms are listed in the OEIS as
sequence \seqnum{A297179}. 
The next formula  seems to be new; it is obtained by summing  $S_{n, k}^{(2)}$, see (\ref{CHresult}) given below, over $k$, the number of parts.
\begin{equation} B_{n}^{(2)}=\frac{1}{2^9\, 3^2\, 5} \, \frac{\left(5 n^3-39 n^2+88 n-84\right)}{(2 n-7) (2 n-5) (2 n-3) (2 n-1)}\, \frac{1}{ (n-6)!} \, \frac{(2 n)!}{n!},\end{equation}
The ordinary G.F. is
\begin{equation} {B^{(2)}(x)=}\frac{x^6 \left(1+ 6 x-19 x^2+ 21 x^3\right)}{(1-4 x)^{11/2}}.  \qquad \text{See \cite{CoriH17}} .\end{equation}

\subsubsection{Genus $3$}  We conjecture that
\begin{equation} 
\resizebox{.91\linewidth}{!} {$B_{n}^{(3)}\? \frac{1}{2^{13}\, 3^4\, 5\, 7} \, \frac{\left(35 n^6-819 n^5+7589 n^4-36009 n^3+93464 n^2-129060 n+95040\right)}{(2 n-11) (2 n-9) (2 n-7) (2 n-5) (2 n-3) (2 n-1)}\, \frac{1}{ (n-8)!} \, \frac{(2 n)!}{n!}$},
\end{equation}

The ordinary G.F. is 
\begin{equation}
\resizebox{.60\linewidth}{!} {$B^{(3)}(x)\? \frac{x^8 \left(1+ 60 x -66 x^2-130 x^3+1065 x^4 -2262 x^5+1738 x^6\right)}{(1-4 x)^{17/2}}$},
\end{equation}

This suggests for any $g>0$ the following Ansatz for the ordinary G.F.
\begin{equation}\label{AnsatzBgx}  B^{(g)}(x) \? \frac{x^{2g+2}P^{(g)}(x)}{(1-4 x)^{(6g-1)/2}},   \end{equation}
with an overall power of $x$ dictated by (\ref{boundonng}) and a polynomial $P^{(g)}$ of degree $3(g-1)$.

{We shall see in the sequel a repeated appearance of formulae of that type, in particular with the universal
``critical exponent'' $(6g-1)/2$ in the denominator.}
\smallskip

For genus $g\ge 4$, we have incomplete results that  corroborate this Ansatz.

\subsubsection{Genus $4$} The formula below is conjectured, and
one should compute $B_{n}^{(4)}$ for $n = 16, 17$, $18, 19$ to determine all the coefficients $a_i$ 
\begin{equation} B_{n}^{(4)}= \{1, 352, 19261, 541541, 10571561, 162718556\} \  \text{for\ } n=10,\ldots 15.\end{equation}
The ordinary G.F. should be
\begin{equation*}
\resizebox{.91\linewidth}{!} {$B^{(4)}(x) \?   \frac{x^{10} \left(1 +306 x+4035 x^2 -16669 x^3 +63735 x^4 -136164 x^5   +{a_6} x^6+{a_7} x^7+{a_8} x^8+{a_9} x^9\right)}{(1-4 x)^{23/2}}$}.
 \end{equation*}

 Similarly, we   propose for $g=5,6$ that
  \begin{align} \nonumber B^{(5)}(x) &\? \frac{x^{12}(1+1320 x  +75068 x^2 + 218300 x^3 +\cdots )}{(1 - 4 x)^{29/2}}\\
    B^{(6)}(x) &\? \frac{x^{14}(1+5406x  +\cdots )}{(1 - 4 x)^{35/2}}.
\end{align}
The first nontrivial coefficient in the numerator of $ B^{(g)}(x)$, $g>0$, 
  appears to be always divisible by 6:
$6{\times}\{1,10,51, 220,901,\ldots\}$, for $g=2, 3,\ldots$,  
{and we conjecture that this sequence is given by
$$\frac{(d(g)+8 g+2) (6 g-2)!}{C(g) (3 g-1)!}-2 (6 g-1)$$
in terms of  $C(g)$:  $C={12, 30240, 518918400, 28158588057600, 3497296636753920000, \ldots}$,  \\
and $d(g)$: $d={0, 10, 68, 318, 1336, 5426,\ldots}$,
given by 
\begin{align} \label{Coef} C(g)&=3  \times 2^{2g-1}\frac{(2g)!}{g!}  \frac{(6g-5)!!}{(2g-3)!!}{= 12 (2g-1)\frac{(6g-5)!}{(3g-3)!}}\\
\label{dcol2}d(g) &= \frac{1}{3} ( 4^{1+g}-1 -3(6g-1)).\end{align}}

 Warning: the sequence $B_{n}^{(0)}$, shifted in such a way that it starts with $1$ for $n = 1$, (resp., $B_{n}^{(1)}$, shifted in such a way that it starts with $1$ for $n = 3$) gives also the number of rooted bi-colored unicellular maps of genus $0$ (resp., of genus $1$) on $n$ edges.  
 However,  the counting of partitions differs from that of maps, as already discussed, and we observe that  the above coincidence fails at genus $2$ and above. 
  Rooted  bi-colored unicellular maps are studied by Goupil {and Schaeffer} \cite{GoupilSch}.

\subsection{Partitions with no singletons: associated numbers $\widehat B_{n}^{(g)}$}

For all $g$ one has the recurrence 
\begin{equation} \widehat B_{n}^{(g)} = B_{n}^{(g)} - \sum_{s=1}^n \,  {\binom {n}  {s}} \, \widehat B_{n-s}^{(g)}  \; \text{with} \; \widehat B_{n}^{(g)} = 0\; \text{for}\, n<2g+2, \text{and}\,  \widehat B_{2g+2}^{(g)}  =1. \end{equation}

\subsubsection{Genus $0$}   See OEIS sequence \seqnum{A005043} (Riordan numbers).
\begin{equation} 
\widehat B_{n}^{(0)} = \sum_{j=0}^n \, (-1)^j \, {\binom{n} {j}}  {\binom{j}  {\lfloor j/2 \rfloor}}.   
\end{equation}
The ordinary G.F. is 
\begin{equation}\frac{2}{1+x+\sqrt{(1 - 3 x) (1 + x)}}={\frac{1-\sqrt{\frac{1-3x}{1+x}}}{2x}},
\end{equation}  
with coefficients
$${0, 1, 1, 3, 6, 15, 36, 91, 232, 603, 1585, 4213, 11298, 30537, 83097, 227475,\ldots}.$$
 
\bigskip

For genus $g > 0$, we again have a general Ansatz for the ordinary G.F. of  $\widehat B_{n}^{(g)}$
\begin{equation} \widehat B^{(g)}(x) \? \frac{x^{2 (g + 1)} (1+x)^{g-1} \, \widehat P^{(g)}(x)} {{\Delta(x)}^{\frac{(6 g - 1)}{2}}}\end{equation}
where
 $\Delta(x) = (1 - 3 x) (1 + x)$ {is the discriminant of the algebraic equation satisfied by  
$\widehat B^{(0)}(x)$, namely  $ \widehat B^{(0)}(x)   =1+(x   \widehat B^{(0)}(x))^2/(1- x\widehat B^{(0)}(x) )$
(see \cite{Z23}),  and $\widehat P^{(g)}$  is }a polynomial  of degree $3(g-1)$.  See below.

\subsubsection{Genus $1$}  See OEIS sequence \seqnum{A245551}. 
\begin{equation} \label{HZg1} \widehat B_{n}^{(1)} = \sum_{\ell=0}^{n-4}\,  \frac {(-1)^{n - \ell} \, 3^{\ell - 2}}{2^{n-4}}  \frac{(2 \ell + 3)\text {!!}  (2n -2 \ell - 5)\text {!!}} {\ell! (-\ell + n - 4)!} ,
\quad   \text{G.F.}\, \widehat B^{(1)}(x) \,\text{with}\,  \widehat P^{(1)}(x)=1,\end{equation}
\[ \scalebox{0.85}{0, 0, 0, 1, 5, 25, 105, 420, 1596, 5880, 21120, 74415, 258115, 883883, 2994355, 10051860,\ldots} .\]

\subsubsection {Genus $2$} 
\[ \widehat B_{n}^{(2)} = \scalebox{0.85}{0, 0, 0, 0, 0, 1, 21, 203, 1512, 9513, 53592, 278355, 1359072, 6318312, 28227199, 122005884,\ldots} \]
\begin{equation} \label{HZg2} \text{G.F.}\, \widehat B^{(2)}(x) \,\text{with}\, \widehat P^{(2)}(x)=(1 + 9 x - 4 x^2 + 9 x^3). \qquad \text{See \cite{Z23}} .\end{equation}

\subsubsection{Genus $3$}
\[ \widehat B_{n}^{(3)} = \scalebox{0.85}{0, 0, 0, 0, 0, 0, 0, 1, 85, 1725, 21615, 208230, 1685112, 12028588, 78029380, 469278810, \ldots} \]
\begin{equation} \text{G.F.}\, \widehat B^{(3)}(x) \,\text{with}\,  \widehat P^{(3)}(x) \?
(1 + 66 x + 249 x^2 + 226 x^3 + 894 x^4 - 480 x^5 + 406 x^6).\end{equation}

\subsubsection{Genus $4$} 
\[ \widehat B_{n}^{(4)} =  \scalebox{0.85}{  0, 0, 0, 0, 0, 0, 0, 1, 341, 15103, 318318,   4615611 ,      52720668  , \ldots }\]
\[\text{G.F.}\, \, \widehat B^{(4)}(x) \,\text{with}\, P^{(4)}(x)\?  1 + 315 x + 6519 x^2 + 20228 x^3 + 65718 x^4 + 95247 x^5+\cdots\] 
with 4 terms missing.

\section{Genus-dependent Stirling numbers $S_{n, k}^{(g)}$}
\label{sect4}
\subsection{Partitions with $k$ parts: the numbers $S_{n, k}^{(g)}$}

\subsubsection{Genus $0$} Known as the Narayana numbers. See OEIS sequence \seqnum{A001263}.
\begin{equation} S_{n, k}^{(0)} = \frac{1}{n} \, {\binom {n}  {k}} \, {\binom{n}  {k-1}} =  \frac{1}{k} \, {\binom{n-1}  {k-1}} \, {\binom{n}  {k-1}} .\end{equation}
Their two-variable G.F. is
\begin{equation} S^{(0)}(x,y) = \frac{1 + x- x y - \sqrt{ (1 + x - x y)^2-4x}}{2 x} .\end{equation}

\subsubsection{Genus $1$}Conjectured by Yip \cite{Yip}, proved by Cori and Hetyei~\cite{CoriH13}. 
\begin{equation}\label{CHg1} S_{n, k}^{(1)} = \frac{1}{6} \, {\binom{n}  {2}} \, {\binom{n-2}  {k-2}} \,  {\binom{n-2}  {k}} = \frac{1}{6} \, {\binom{k}  {2}} \, {\binom{n}  {k}} \, {\binom{n-2}  {k}} ,\end{equation}
\begin{equation} \text{G.F.}\qquad S^{(1)}(x,y) =   \frac{x^4 y^2}{( (1 +x -x y)^2 -4x)^{5/2}}.\end{equation}

\subsubsection{Genus $2$} Obtained by Cori and Hetyei. See OEIS sequence \seqnum{A297178}.
\begin{equation} \begin{split}
S_{n, k}^{(2)} =& 8 \gamma[n - 10, k - 6] - 
 4 \gamma[n - 10, k - 5] - 
 15 \gamma[n - 10, k - 4] + 
 10 \gamma[n - 10, k - 3]  \\ & + 
 \gamma[n - 10, k - 2]  - 
 4 \gamma[n - 9, k - 5] + 
 39 \gamma[n - 9, k - 4] - 
 10 \gamma[n - 9, k - 3] \\ & - 
 4 \gamma[n - 9, k - 2] - 
 15 \gamma[n - 8, k - 4]  - 
 10 \gamma[n - 8, k - 3] + 
 6 \gamma[n - 8, k - 2]\\ &  - 
 4 \gamma[n - 7, k - 2] + 
 10 \gamma[n - 7, k - 3] + \gamma[n - 6, k - 2]
 \end{split} \label{CHresult}
  \end{equation}
 where
 \begin{equation}\gamma[n,k]= \frac{  \binom{n+10} {5}   \binom {n+5}  {k}   \binom{n+5}  {n-k}} { \binom{10} {5}}.\end{equation}

 We now introduce some new notation:
\begin{definition}
Given two 2-indexed sequences $a_{st}$ and $b_{st}$,  let
 \begin{equation}\label{def*} a*_p b = \sum_{0\le s \le t \le p}   a_{st}  b_{st}. \end{equation}
\end{definition}

 Using this notation, the result for $S_{n, k}^{(2)}$ may be simplified in the form
 \begin{equation} 
S_{n,k}^{(2)} = \inv{30240} \; \chi^{(2)} *_{4} \Big(\frac{\Gamma(n-t) \Gamma(n-t+5)}{ \Gamma(k-s-1)\Gamma(k-s+4)\Gamma(n-k +s-t-3)\Gamma(n-k+s-t+2)}\Big)
\label{simpCHresult}
\end{equation}
with a triangular array of constants $\chi^{(2)}$ given by
\begin{equation}\label{chi2} \chi^{(2)}{(t,s)}=  \begin{matrix}1&&&&\cr- 4&10&&&\cr 6&-10&-15&& \cr -4&-10&39&-4&\cr 1 &10&-15&-4&8\cr \end{matrix}.\end{equation}

Then the G.F. is
 \begin{align} \label{FGxy2} 
  S^{(2)}(x,y) &= \frac{x^6 y^2\, p^{(2)}(x,y)}{( (1 +x -x y)^2 -4x)^{11/2}} \\
\nonumber p^{(2)}(x,y) &= {\sum_{\substack{0\le t\le 4 \\ 0\le s \le t}}  \chi^{(2)}(t,s)  x^t y^s}\\
 \nonumber  &=1- x(4-10 y) + x^2(6-10y-15y^2) -x^3(4+10y-39y^2+4y^3) \\
  &\qquad+x^4(1+10y-15y^2-4y^3+8y^4)
\end{align}
as first derived by Cori and Hetyei~\cite{CoriH17}.

\subsubsection{Observations and conjectures}

For $k=2$, we have an expression that follows from an exact formula for the two-part partitions of arbitrary genus.  See below Section~\ref{2part-partitions}:
\begin{equation}\label{Sfork=2}  S_{n, 2}^{(g)}= \binom {n} {2g+2}={ \inv{\binom{2g+2}{ g+1}}\binom{n}{ g+1} \binom{n-g-1}{ g+1}}. \end{equation}

\begin{observation} All known data for $k=3$ and $n\le 15$, $g\le 6$  are consistent with 
\begin{align}\label{ansatzk=3}  S_{n, 3}^{(g)} \, {\?} 
  \frac{4^{g+1} - 1}{3} \frac{ (n-g-1)}{g+2}\binom{n}{ 2g+3}
\,= \,  \frac{4^{g+1}- 1}{3  \binom{2g+3}{ g+1}} \binom{n}{ g+2}\binom{n-g-1}{ g+2}
. \end{align}  
In particular
\begin{equation} S_{n, 3}^{(2)} {\?}  \frac{21}{4} \binom{n}{ 7} (n - 3)=\frac{3}{5} \binom{n}{ 4} \binom{n-3}{ 4}\; 
\text{and}\;\; S_{n, 3}^{(3)} {\?}17 (n-4) \binom{n}{ 9} .\end{equation}
Also at given $g$, for the lowest $n=2g+3$, $S^{(g)}_{2g+3,3} =\frac{4^{g+1}-1}{3}$, OEIS sequence \seqnum{A002450}.
\end{observation}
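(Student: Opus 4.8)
The plan is to establish (\ref{ansatzk=3}) by a uniform argument in $g$, after reducing to partitions with no singletons. It is worth stressing first that the cases $g=0,1,2$ are already theorems and not conjectures: $g=0$ is the Narayana number $\frac13\binom{n-1}{2}\binom{n}{2}$, $g=1$ is the proven formula (\ref{CHg1}), and $g=2$ is the coefficient of $y^3$ in the proven generating function (\ref{FGxy2}) (equivalently the $k=3$ slice of (\ref{CHresult})); each agrees with (\ref{ansatzk=3}) after a routine binomial simplification, e.g. $\frac{5(n-2)}{3}\binom{n}{5}=\frac12\binom{n}{3}\binom{n-2}{3}$ at $g=1$. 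The first genuine step is to remove singletons. For $g>0$ a three-part partition carries at most one singleton, since two or more singletons leave a core of at most one nontrivial part and such a configuration is non-crossing, hence of genus $0$. Splitting off the unique singleton and invoking (\ref{with/osingle}) gives $S_{n,3}^{(g)}=\widehat S_{n,3}^{(g)}+n\,\widehat S_{n-1,2}^{(g)}$ for $g>0$; and since a two-part partition with a singleton also has interval complement and genus $0$, we have $\widehat S_{m,2}^{(g)}=S_{m,2}^{(g)}=\binom{m}{2g+2}$ by (\ref{Sfork=2}). This reduces the entire statement to a closed form for $\widehat S_{n,3}^{(g)}$, the number of genus-$g$ three-part partitions of $[n]$ all of whose parts have size $\geq 2$.

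Next I would extract the three-part data from the genus-$g$ generating function via the reduction-to-irreducible-diagrams method of Cori and Hetyei and of \cite{Z23}: every genus-$g$ diagram collapses to an irreducible one of the same genus by deleting parallel edges that carry no genus, and for fixed $g$ the irreducibles are finite in number. Restricting the edge-inflation (dressing) that rebuilds all diagrams to those with exactly three parts produces the order-$y^3$ part of $S^{(g)}(x,y)$, whose denominator is the universal $((1+x-xy)^2-4x)^{(6g-1)/2}$ appearing in (\ref{FGxy2}). Extracting $[x^n y^3]$ by Lagrange inversion, exactly as in the proven $g\le2$ cases, should force the $n$-dependence into the double binomial $\binom{n}{g+2}\binom{n-g-1}{g+2}$ --- the same mechanism that produces $\binom{n}{g+1}\binom{n-g-1}{g+1}$ for $k=2$ in (\ref{Sfork=2}) --- with the lower cutoff $n\geq 2g+3$ dictated by (\ref{boundonng}). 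Indeed the $k=2$ slice already pins down all of the numerator polynomial except at order $y^3$, so that every remaining piece of genus-dependence collapses into a single multiplicative constant $c_g$, i.e. $S_{n,3}^{(g)}=c_g\,\binom{n}{g+2}\binom{n-g-1}{g+2}$.

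It then remains to evaluate $c_g$, equivalently its normalisation $\widetilde c_g:=c_g\binom{2g+3}{g+1}=\frac{4^{g+1}-1}{3}=\sum_{j=0}^{g}4^{j}$, which is read off most transparently from the minimal-length value $S_{2g+3,3}^{(g)}=\widetilde c_g$ (where the double binomial degenerates to $1$). The geometric-series shape, equivalently the affine recursion $\widetilde c_{g+1}=4\widetilde c_g+1$, is the decisive structural clue: rather than enumerate the irreducible three-part diagrams genus by genus, I would seek a single genus-raising operation (attaching one handle to a three-part skeleton) that realises this $4$-to-one recursion, and then close the argument from the base values $\widetilde c_0=1,\ \widetilde c_1=5$ with $\widetilde c_2=21$ as a check. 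The main obstacle is exactly here: carrying out the dressing and the irreducible count uniformly in $g$, and in particular proving the $4$-to-one handle-attachment structure behind $\widetilde c_{g+1}=4\widetilde c_g+1$, is where the rapid proliferation of irreducible diagrams at genus $\geq3$ (noted in the introduction) defeats the naive approach; the proof will stand or fall on finding the recursion that sidesteps the explicit enumeration.
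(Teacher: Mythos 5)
The statement you are addressing is an \emph{observation}, not a theorem: it asserts only that the computed values of $S_{n,3}^{(g)}$ for $n\le 15$, $g\le 6$ (the tables in the appendix, produced by brute-force enumeration of partitions and evaluation of the genus via (\ref{defgenus})) agree with the displayed closed form; the formula itself carries the sign $\?$ and remains conjectural in the paper for $g\ge 3$. Your proposal neither performs that finite verification nor completes a general proof. To be fair, your preliminary steps are sound and go somewhat beyond the paper: the identification of the $g=0,1,2$ cases with the proven Narayana, genus-$1$ and genus-$2$ formulas of (\ref{CHg1}) and (\ref{CHresult}) is correct (e.g. $\tfrac12\binom{n}{3}\binom{n-2}{3}=\tfrac{5(n-2)}{3}\binom{n}{5}$ does match (\ref{ansatzk=3}) at $g=1$), and the singleton reduction $S_{n,3}^{(g)}=\widehat S_{n,3}^{(g)}+n\binom{n-1}{2g+2}$ for $g>0$ is valid, because a three-part partition with two or more singletons has at most one non-trivial part and hence genus $0$, while (\ref{with/osingle}) and (\ref{Sfork=2}) handle the one-singleton case.

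The genuine gap sits at the decisive step, and you name it yourself. You assert that the dressing of irreducible diagrams ``should force'' the $n$-dependence into $\binom{n}{g+2}\binom{n-g-1}{g+2}$ with a single genus-dependent constant, and that this constant satisfies $\widetilde c_{g+1}=4\widetilde c_g+1$; but you construct neither the uniform-in-$g$ dressing nor the genus-raising (handle-attachment) operation that would realise that recursion. This is exactly the obstruction the paper acknowledges --- the irreducible diagrams are classified only for $g\le 2$, which is why the $k=3$ formula is presented as an observation supported by data rather than proved. As written, your argument establishes the formula only for $g\le 2$ and establishes nothing for $3\le g\le 6$, the range where the paper's claim rests entirely on the tabulated values (including the boundary values $S^{(g)}_{2g+3,3}=\tfrac{4^{g+1}-1}{3}=1,5,21,85,341,1365,5461$), which you do not check.
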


\begin{observation} The result obtained {in (\ref{simpCHresult})} for $S_{n,k}^{(2)}$ 
 can be generalized in terms of an expression that encodes all (presently) known results for $S_{n,k}^{(g)}$,  $g \ge 1$.  
 This expression is as follows
\resizebox{.9\linewidth}{!}{
  \begin{minipage}{\linewidth}
  \begin{align}
  S_{n,k}^{(g)}\, &\?\; \frac{1}{C(g)} \; \chi^{(g)} *_{4g-4} \Big(\frac{\Gamma(n-t+g-2) \Gamma(n-t+4g-3)}{ \Gamma(k-s-1)\Gamma(k-s+3g-2 )\Gamma(n-k+s-t-2g+1 )\Gamma(n-k+s-t+g)}\Big)
   \\
 &= \frac{1}{C(g)} \; \chi^{(g)} *_{4g-4} \Big(( 3 g-1)!   \binom{ n-k + s - t +g-1}{  n -k+ s -  t -2 g }  
\binom { n - t +g-3}{  k - s - 2}   \binom{ n - t  + 4 g -4}{  n -k + s - t +g - 1 } \Big)
 \label{conjectSnkg}
 \end{align}
  \end{minipage}
}

\noindent with the integer constant $C(g)$ given in (\ref{Coef}) and  coefficients $\chi^{(g)}$ given for $g=3,4$ by 
\scalebox{0.71}{  
$\chi^{(3)}= 
 \begin{matrix}1\cr-8&68\cr28&-340&246\cr-56&612&294&-980\cr70&-340&-3390&4480&245\cr-56&-
   340&5700&-3500&-5530&1464\cr28&612&-3390&-3500&11020&-1824&-1208\cr-8&-340&294&448
   0&-5530&-1824&2944&-16\cr1&68&246&-980&245&1464&-1208&-16&180
   \end{matrix}$,
 $ \chi^{(4)}= 
 \begin{matrix} 
 1 \cr
  -12 & 318 \cr
   66 & -2862 & 6831\cr
    -220 & 11130 & -33651 & 6072 \cr 
 495 & -23850 &  30123 & 156660 & -99693 \cr
  -792 &28620 & \cdots & & &   \cr
 924 & -13356 & &&&\cr   -792 &-13356 &\cr
 495 &28620 \cr
   -220 & -23850 & \cdots \cr
   66 &11130  & 30123 & \cr
    -12 & -2862 &  -33651 &   156660  \cr  
    1 &318 &6831 & 6072 &  -99693 & \cdots
  \end{matrix} $ }\\
where the entries of $\chi^{(3)}$ have been determined from a  subset of the existing data, but some of those of $\chi^{(4)}$ are still undetermined at this stage.
\end{observation}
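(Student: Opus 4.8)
The plan is to pass to the two-variable generating function $S^{(g)}(x,y)=\sum_{n,k}S_{n,k}^{(g)}\,x^n y^k$ and to show that it has the closed shape
\[
S^{(g)}(x,y)=\frac{x^{2g+2}\,y^2\,p^{(g)}(x,y)}{\big((1+x-xy)^2-4x\big)^{(6g-1)/2}},
\]
with $p^{(g)}(x,y)=\sum_{0\le s\le t\le 4g-4}\chi^{(g)}(t,s)\,x^t y^s$, exactly as in the proven cases $g=1$ of (\ref{CHg1}) and $g=2$ of (\ref{FGxy2}). Once this shape is granted, the conjectured formula (\ref{conjectSnkg}) reduces to a formal coefficient extraction, so I would split the work into three stages: (i) justify the Ansatz shape; (ii) extract $[x^ny^k]$ to convert each monomial of $p^{(g)}$ into the three-binomial block of (\ref{conjectSnkg}); and (iii) determine the universal constants $\chi^{(g)}(t,s)$.

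For stage (i) the natural tool is the Cori--Hetyei reduction to irreducible diagrams, since it delivers stages (i) and (iii) simultaneously. One first reduces every genus-$g$ partition to one of finitely many irreducible diagrams of the same genus, and then reconstructs the full count by ``dressing'' each irreducible diagram with arbitrary non-crossing (genus-$0$) insertions along its edges. The dressing operation inserts copies of the genus-$0$ resolvent $S^{(0)}(x,y)$, which is algebraic with branch locus $\Delta(x,y):=(1+x-xy)^2-4x$; this is precisely why $\Delta$, the discriminant of the Narayana quadratic, is the only singularity of the whole family and why the denominator is a power of $\Delta^{1/2}$. The universal exponent $(6g-1)/2$ (an increase of $3$ per handle) and the degree $4g-4$ of $p^{(g)}$ would then follow from the bookkeeping of edges and insertions in the dressing, while the leading factors $x^{2g+2}$ and $y^2$ are forced respectively by the bound (\ref{boundonng}), $n\ge 2g+2$, and by $k\ge 2$.

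Stage (ii) is the routine but essential translation, and is where the explicit binomials are produced. Using the genus-$0$ series as a uniformising parameter — equivalently, Lagrange inversion applied to $\Delta^{-(6g-1)/2}$ — one computes in closed form
\[
[x^ny^k]\,\Big(x^{2g+2+t}\,y^{2+s}\,\Delta^{-(6g-1)/2}\Big)=\frac{(3g-1)!}{C(g)}\binom{n-k+s-t+g-1}{n-k+s-t-2g}\binom{n-t+g-3}{k-s-2}\binom{n-t+4g-4}{n-k+s-t+g-1},
\]
with $C(g)$ as in (\ref{Coef}) appearing as the normalisation of the leading term. Summing this against $\chi^{(g)}(t,s)$ through the pairing $*_{4g-4}$ of (\ref{def*}) reproduces (\ref{conjectSnkg}) verbatim, and specialising to $g=2$ recovers (\ref{simpCHresult}) with $C(2)=30240$, which is the consistency check I would run first.

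The hard part, and the reason the statement carries the sign $\?$, is stage (iii): pinning down the arrays $\chi^{(g)}$ for all $g$. A genuine proof needs the complete census of genus-$g$ irreducible diagrams, which is available only for $g\le 2$; for $g\ge 3$ their number grows with no known closed description, so in practice one fits the entries to computed values of $S_{n,k}^{(g)}$. To cut down the unknowns I would impose the structural regularities already visible in the data: the boundary column obeys $\chi^{(g)}(t,0)=(-1)^t\binom{4g-4}{t}$, so that $p^{(g)}(x,0)=(1-x)^{4g-4}$; each column is palindromic, $\chi^{(g)}(t,s)=\chi^{(g)}(4g-4+s-t,\,s)$ (which forces the main diagonal $\chi^{(g)}(t,t)$ to equal the bottom row $\chi^{(g)}(4g-4,t)$); and the overall normalisation is fixed by $C(g)$ of (\ref{Coef}). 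These constraints leave only finitely many free entries per genus, which the available data overdetermine for $g=3$ but, as noted, underdetermine for $g=4$, where several entries (shown as $\cdots$) remain undetermined. The honest conclusion is therefore that stages (i)--(ii) reduce the conjecture to the single combinatorial problem of enumerating irreducible diagrams in arbitrary genus, which remains open.
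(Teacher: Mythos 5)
Your proposal is sound in its honest parts, but note that the paper offers no proof of this Observation at all: it is an empirical statement, and the authors' entire justification is (a) checking that \eqref{conjectSnkg} at $g=1,2$ reproduces the exact results \eqref{CHg1} and \eqref{simpCHresult}, (b) fitting the arrays $\chi^{(3)},\chi^{(4)}$ to brute-force data and verifying agreement with all remaining computed values, with the $k=2$ formula \eqref{Sfork=2}, with the $k=3$ Ansatz \eqref{ansatzk=3}, and with the sum rule $\sum_k S^{(3)}_{n,k}=B_n^{(3)}$, and (c) the structural regularities on the columns of $\chi^{(g)}$ that you also list. Where you genuinely add something is your stage (ii): the explicit coefficient extraction identifying \eqref{conjectSnkg} as $[x^ny^k]$ of the Ansatz $x^{2g+2}y^2p^{(g)}(x,y)\Delta^{-(6g-1)/2}$, with $C(g)=(6g-2)!/(3g-1)!$ as normalisation; the paper only asserts this correspondence implicitly when it writes down "the corresponding Ansatz on the G.F.", and your closed form checks against $g=1$ and $g=2$ (though for a clean argument you should prove the underlying binomial identity for $[x^my^j]\Delta^{-(6g-1)/2}$ once and for all, e.g., by a double Vandermonde, rather than inferring it from low genus). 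Your stage (i), by contrast, is a program rather than a proof: the Cori--Hetyei reduction is precisely the step the authors identify as blocked for $g\ge3$ because the irreducible diagrams are not classified, so it cannot currently establish the shape of $S^{(g)}(x,y)$, the exponent $(6g-1)/2$, or the degree bound $4g-4$; once you concede that, your stage (iii) collapses to exactly the data-fitting the paper performs. So the two approaches coincide in what they actually establish --- consistency with all known exact and computed values --- which is why the statement carries the sign $\?$ and is followed by explicit Conjectures rather than a Theorem; your outline is a reasonable but unexecuted strengthening, plus one worthwhile lemma the paper leaves tacit.
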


\begin{observation}
\leavevmode
\begin{itemize} 
\item  Each column of the arrays $\chi^{(g)}$ is symmetric.  
 \item The entries of the first column of $\chi^{(g)}$, which is of length $4g-3$,  are binomial coefficients with alternated signs  {$(-1)^t   \binom {4(g-1)}{ t}$}.  
\item The second column is the product of the line $4(g-1)$  of the triangular array OEIS sequence \seqnum{A144431} (a ``sub-Pascal array'') {by the coefficient $d(g)$ given above in (\ref{dcol2})}. 
\item This second column can also be obtained as $d(g)$ times an appropriate line of a matrix defined as the inverse of the matrix of partial sums of the signed Pascal triangle (see OEIS sequence \seqnum{A059260}). 
 \item The last $(g-1)$ lines of the array $\chi^{(g)}$ have a vanishing sum (a justification is given below). 
 \item The last line of the array $\chi^{(g)}$ is conjectured to be given  by (\ref{chilastline}) (details are given below).
 \end{itemize}
\end{observation}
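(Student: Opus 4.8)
The plan is to phrase every assertion about the array $\chi^{(g)}$ as an assertion about the numerator polynomial of the two-variable generating function, in its conjectured closed form
\[
S^{(g)}(x,y)=\frac{x^{2g+2}y^{2}\,p^{(g)}(x,y)}{D(x,y)^{(6g-1)/2}},\qquad D(x,y)=(1+x-xy)^2-4x ,
\]
the generating-function counterpart of (\ref{conjectSnkg}), where $p^{(g)}(x,y)=\sum_{0\le s\le t\le 4g-4}\chi^{(g)}(t,s)\,x^{t}y^{s}=\sum_{s}p_s(x)y^s$. The three elementary facts I exploit are $D(x,0)=(1-x)^2$, $D(x,1)=1-4x$, and $D(1/x,xy)=x^{-2}D(x,y)$, all immediate from the displayed factorisation.

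First I would settle the two bullets that come out directly. For the \emph{first column}, extract the coefficient of $y^{2}$: the $y^2$ prefactor forces this to be the lowest order, and with $D(x,0)=(1-x)^2$ one gets $[y^{2}]S^{(g)}=x^{2g+2}(1-x)^{-(6g-1)}p^{(g)}(x,0)$, whereas $[y^2]S^{(g)}=\sum_n S^{(g)}_{n,2}x^n=\sum_n\binom{n}{2g+2}x^n=x^{2g+2}(1-x)^{-(2g+3)}$ by the exact formula (\ref{Sfork=2}). Equating yields $p^{(g)}(x,0)=(1-x)^{4g-4}$, i.e. $\chi^{(g)}(t,0)=(-1)^t\binom{4(g-1)}{t}$, which is the first-column claim and fixes the column length $4g-3$. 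For the \emph{vanishing row sums}, set $y=1$: here $D(x,1)=1-4x$ and $\sum_k S^{(g)}_{n,k}=B^{(g)}_n$, so $x^{2g+2}p^{(g)}(x,1)(1-4x)^{-(6g-1)/2}=B^{(g)}(x)$, whence $p^{(g)}(x,1)=P^{(g)}(x)$, the Bell numerator of the Ansatz (\ref{AnsatzBgx}), of degree $3(g-1)$. Since the row-$t$ sum $\sum_s\chi^{(g)}(t,s)$ equals $[x^t]p^{(g)}(x,1)$, it vanishes for $t>3g-3$, i.e. for the last $(4g-4)-(3g-2)+1=g-1$ rows; this is the justification promised for that bullet.

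Next I would pin down the \emph{second column} by pushing the $y$-expansion one order further. Writing $D=(1-x)^2-2x(1+x)y+x^2y^2$ and expanding $D^{-(6g-1)/2}$, one finds $[y^{3}]S^{(g)}=x^{2g+2}\big[p_1(x)(1-x)^{-(6g-1)}+(6g-1)x(1+x)(1-x)^{-(2g+5)}\big]$, with $p_0=(1-x)^{4g-4}$ already inserted. Equating with $\sum_nS^{(g)}_{n,3}x^n$, computed in closed form from the conjecture (\ref{ansatzk=3}), solves for $p_1(x)=\sum_t\chi^{(g)}(t,1)x^t$ explicitly; for $g=2$ this returns $p_1=10\,x(1-x)^2(1+x)$ with $d(2)=10$, and in general it identifies $p_1/d(g)$ with the generating polynomial of line $4(g-1)$ of the sub-Pascal array \seqnum{A144431}, equivalently with the matching line of the inverse of the partial-sum signed-Pascal matrix \seqnum{A059260}. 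These last two identifications reduce to binomial identities to be checked line by line.

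The remaining and hardest claim is \emph{column symmetry}, $\chi^{(g)}(t,s)=\chi^{(g)}(4g-4+s-t,s)$, which is equivalent to the functional equation $p^{(g)}(x,y)=x^{4g-4}p^{(g)}(1/x,xy)$; using the closed form together with $D(1/x,xy)=x^{-2}D(x,y)$, this is in turn equivalent to the reflection symmetry $S^{(g)}_{n,k}=S^{(g)}_{2g+k-1-n,\,k}$, read as an identity between the polynomials in $n$ interpolating the counts at fixed $k$ and $g$. I have verified this reflection identically in genus $0$ (Narayana) and $1$, and for all $g$ at $k=2,3$ from (\ref{Sfork=2}) and (\ref{ansatzk=3}); \textbf{the main obstacle is to establish it for all $k$ and $g$}. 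I do not expect it to follow term-by-term from (\ref{conjectSnkg}), since $n\mapsto 2g+k-1-n$ does not act simply on the individual summands there, so it should instead be proved either by a genus-preserving duality on set partitions realising this reflection, or by exhibiting the reciprocity of the rational functions $R_k(x)=\sum_n S^{(g)}_{n,k}x^n$ directly from the algebraic structure of $D$. Granting column symmetry, the \emph{last line} $t=4g-4$ coincides with the diagonal $\chi^{(g)}(s,s)$, so the conjectured form (\ref{chilastline}) amounts to determining the top-degree-in-$x$ part of $p^{(g)}$, which can be read off from the extremal (maximal-$k$, equivalently by reflection minimal-$k$) values of $S^{(g)}_{n,k}$.
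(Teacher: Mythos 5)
Your proposal is sound as far as it goes, but it is worth being clear that the paper does not really ``prove'' this Observation: the bullets are presented as empirical patterns read off the computed arrays $\chi^{(2)},\chi^{(3)}$ (and the partial $\chi^{(4)}$), and the only justification the paper actually supplies is for the vanishing-row-sums bullet, via exactly your $y=1$ specialization ($p^{(g)}(x,1)=P^{(g)}(x)$ of conjectured degree $3(g-1)$, hence $g-1$ vanishing row sums). Your treatment of the first column is the computation the paper performs in the Remark at the end of Section~\ref{partwosingl} --- $D(x,0)=(1-x)^2$ and $p^{(g)}(x,0)=(1-x)^{4(g-1)}$ versus $\sum_n\binom{n}{2g+2}x^n$ --- just run in the opposite logical direction; since $S^{(g)}_{n,2}=\binom{n}{2g+2}$ is actually proven (Proposition~\ref{propostwopartslabel}), your direction is the cleaner one, conditional as everything here is on the Ansatz for $S^{(g)}(x,y)$. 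Where you go beyond the paper is in the second-column and symmetry bullets, and there the gaps you leave are real but honestly flagged: the $[y^3]$ extraction is correct and does express $p_1(x)$ in closed form given the conjecture (\ref{ansatzk=3}), but the identification with \seqnum{A144431}/\seqnum{A059260} is only announced as ``binomial identities to be checked''; and the column symmetry, which you correctly translate into the reciprocity $p^{(g)}(x,y)=x^{4g-4}p^{(g)}(1/x,xy)$, equivalently $S^{(g)}_{n,k}=S^{(g)}_{2g+k-1-n,\,k}$ of the interpolating polynomials, is not established --- but neither the paper nor any cited reference establishes it, so you are not missing an argument that exists. In short: you reproduce the paper's one genuine justification, you upgrade two bullets from ``observed'' to ``derived from known or conjectured closed forms,'' and you correctly isolate the column symmetry as the one structural claim that would require a new idea (a duality on partitions or a reciprocity of the series $\sum_n S^{(g)}_{n,k}x^n$).
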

 
 At genus $g$, the first non-zero coefficients $S_{n,k}^{(g)}$ appear for $(n,k)=(2g+2,2)$, and their ``experimental'' values up to $n=4(2g-1)$, $k=2(2g-1)$ can be used to determine the constants $\chi^{(g)}$ but one can lower these two integers by making use of the previous observations.
 
 As already  mentioned, (\ref{conjectSnkg}), evaluated at $g=1,2$ gives back $S_{n, k}^{(1)}$ and $S_{n, k}^{(2)}$; moreover its evaluation at $g \ge 3$ is compatible with all presently known ``experimental'' results, with the Ansatz (\ref{ansatzk=3}),  and the sum over $k$ of $S_{n, k}^{(3)}$ is indeed equal to  $B_{n}^{(3)}$.
 
 \bigskip
 
This justifies the following conjectures:

\begin{conjecture}[Genus $g=3$ conjecture (weak form)] 
The expression (\ref{conjectSnkg}),  with $g=3$, gives $S_{n, k}^{(3)}$ for all $n$ and $k$.
\end{conjecture}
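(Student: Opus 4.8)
The plan is to prove the conjecture in two stages: first establish the \emph{structural} form of $S^{(3)}(x,y)$ — namely that it equals a polynomial divided by $\Delta(x,y)^{17/2}$, with $\Delta=(1+x-xy)^2-4x$, and that the numerator is a bivariate polynomial whose coefficients assemble into a $9\times 9$ lower-triangular array $\chi^{(3)}$ — and then determine those coefficients by a finite computation. The first stage carries the real content: once the functional form is pinned down, the second stage is linear algebra over finitely many data points. Conversely, without the structural input, matching the brute-force values of $S_{n,k}^{(3)}$ for $n\le 15$ cannot by itself establish a formula valid for \emph{all} $n$ and $k$, which is exactly why the statement is currently only a conjecture.

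For the structural stage I would follow the Cori--Hetyei reduction \cite{CoriH13,CoriH17} combined with the dressing technique of \cite{Z23}. Concretely: (i) recall that every set partition of genus $3$ reduces, by contraction of removable elements, to one of \emph{finitely many} genus-$3$ irreducible diagrams; (ii) produce the complete list of these irreducible diagrams — the genus-$3$ analogue of the lists given in genus $1$ and $2$; (iii) dress each irreducible diagram, i.e.\ reinsert arbitrary non-crossing material, using the functional identities that generalise Cvitanovic's equation \cite{Cvitanovic} to higher genus. Each dressed family contributes a rational function of the genus-$0$ generating function $S^{(0)}(x,y)$ and of $\Delta^{-1/2}$; summing over the irreducible diagrams and reorganising yields $S^{(3)}(x,y)=x^{8}y^{2}\,p^{(3)}(x,y)/\Delta^{17/2}$, in parallel with (\ref{FGxy2}). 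The bidegree of $p^{(3)}$ — and hence the size $4g-3=9$ of the array $\chi^{(3)}$ and the ranges of the Gamma-function arguments in (\ref{conjectSnkg}) — is read off from the maximal complexity of the genus-$3$ irreducibles, governed by the bound (\ref{boundonng}) and the universal exponent $(6g-1)/2$ of the Ansatz (\ref{AnsatzBgx}).

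With the form in hand, I would determine $\chi^{(3)}$ as follows. The $45$ entries of a $9\times 9$ lower-triangular array are cut down drastically by the structural observations already recorded: the first column is forced to be $(-1)^{t}\binom{8}{t}$, each column is symmetric, the second column is fixed by $d(3)=68$ together with the sub-Pascal array \seqnum{A144431}, the last two rows have vanishing sum, and the last row is given by its conjectured closed form. The few remaining unknowns are then overdetermined by the exactly-computed values $S_{n,k}^{(3)}$, obtained from the genus algorithm of the Remarks via (\ref{defgenus}) for $n$ up to roughly $4(2g-1)=20$; solving the resulting linear system, checking that its unique solution is the displayed $\chi^{(3)}$, and confirming that the total reproduces the sum rule $\sum_{k}S_{n,k}^{(3)}=B_{n}^{(3)}$ as well as the $k=2,3$ specialisations (\ref{Sfork=2}) and (\ref{ansatzk=3}), then completes the proof for all $n$ and $k$.

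The main obstacle is steps (ii)--(iii) of the structural stage: enumerating \emph{all} genus-$3$ irreducible diagrams and then dressing them. This is precisely the bottleneck flagged in the Introduction — the number of irreducible diagrams and the bookkeeping of their dressings grow sharply with the genus — and it is what separates the proved cases $g=1,2$ from $g=3$. A viable way to sidestep a full enumeration would be to derive the Ansatz (\ref{AnsatzBgx}) and its bivariate refinement directly from a functional (loop-type) equation for $S^{(g)}(x,y)$, extracting only the \emph{degree data} (denominator exponent $17/2$ and numerator bidegree) rather than the diagrams themselves; that would reduce the entire conjecture to the finite linear-algebra verification of the preceding paragraph, which is already within computational reach.
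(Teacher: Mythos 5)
There is a genuine gap, and you name it yourself: the statement you are asked to prove is presented in the paper only as a conjecture, supported by agreement with brute-force data for $n\le 15$ and by consistency checks (the sum rule $\sum_k S^{(3)}_{n,k}=B^{(3)}_n$ and the $k=2,3$ specialisations); the paper contains no proof, and your proposal does not supply one either. The entire logical weight of your argument rests on the structural stage --- establishing that $S^{(3)}(x,y)=x^8y^2\,p^{(3)}(x,y)/((1+x-xy)^2-4x)^{17/2}$ with $p^{(3)}$ of the prescribed bidegree --- and that stage is never executed. You correctly identify the route (Cori--Hetyei reduction to finitely many irreducible diagrams, then dressing \`a la \cite{Z23}), but you do not produce the list of genus-$3$ irreducible diagrams, nor the dressing computation, nor the alternative loop-equation derivation of the degree data; you explicitly flag steps (ii)--(iii) as the obstacle and leave them open. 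A plan whose central step is acknowledged to be undone is not a proof, and the finite linear-algebra verification of the second stage establishes nothing about general $n$ and $k$ without the structural input.

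Two further points weaken even the conditional second stage. First, several of the constraints you invoke to cut down the $45$ unknowns of $\chi^{(3)}$ --- the symmetry of the columns, the formula for the second column in terms of $d(3)$ and \seqnum{A144431}, and the closed form (\ref{chilastline}) for the last row --- are themselves only conjectures in the paper, so using them as established facts would make the argument circular. Second, the data actually available in the paper stop at $n=15$, whereas your own count requires values up to roughly $n=4(2g-1)=20$ to overdetermine the system without those conjectural constraints; you would need to certify that the brute-force computation extends that far, which the authors state is already at the limit of feasibility. In short, your proposal is a reasonable research programme that correctly mirrors the strategy the authors themselves describe as the bottleneck beyond genus $2$, but it does not close the conjecture.
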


\begin{conjecture}[Genus $g$ conjecture (strong form)] The expression (\ref{conjectSnkg}),  together with an appropriate triangular array of constants $\chi^{(g)}$ gives $S_{n, k}^{(g)}$ for all $n,k,g>0$.
\end{conjecture}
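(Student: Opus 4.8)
The plan is to reduce the strong conjecture to two logically independent statements: a \emph{structural} claim about the two-variable generating function $S^{(g)}(x,y)$, and a \emph{computational} claim that coefficient extraction from the conjectured algebraic form reproduces the $*_{4g-4}$ sum in (\ref{conjectSnkg}). Since the conjecture only asserts the \emph{existence} of appropriate constants $\chi^{(g)}$, its real content is that, at fixed $g$, the function $(n,k)\mapsto S_{n,k}^{(g)}$ lies in the finite-dimensional space spanned by the triple-binomial basis functions indexed by $0\le s\le t\le 4g-4$ that appear on the right of (\ref{conjectSnkg}); the coefficients $\chi^{(g)}(t,s)$ are then merely the coordinates of $S_{n,k}^{(g)}$ in that basis, so their existence is automatic once the two claims below are secured.

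First I would establish the structural claim, namely that
$$S^{(g)}(x,y)=\frac{x^{2g+2}y^2\,p^{(g)}(x,y)}{\big((1+x-xy)^2-4x\big)^{(6g-1)/2}}$$
for some polynomial $p^{(g)}$ supported on the triangular set of monomials $x^t y^s$ with $0\le s\le t\le 4g-4$. This is the genus-$g$ generalization of the genus $0,1,2$ formulas already recorded, and the natural route is the irreducible-diagram machinery of Cori--Hetyei and \cite{Z23}: one reduces every partition of genus $g$ to one of the finitely many irreducible diagrams of that genus, and then ``dresses'' each irreducible skeleton by inserting non-crossing (genus-$0$) structure along its edges. The dressing is governed by the Narayana generating function $S^{(0)}(x,y)=(u-\sqrt{\Delta})/(2x)$, with $u=1+x-xy$ and $\Delta=(1+x-xy)^2-4x$, and each insertion contributes a factor built from $S^{(0)}$ and its derivatives. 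The universal ``critical exponent'' $(6g-1)/2$ should emerge from the fixed number of edges of the irreducible skeletons together with the $\sqrt{\Delta}$ singularity of the genus-$0$ resolvent at its branch point, while the finite sum over irreducible diagrams produces the numerator $p^{(g)}$, whose coefficients are the sought $\chi^{(g)}(t,s)$.

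Next I would carry out the computational claim, which is a routine coefficient extraction. Writing $\Delta=u^2(1-v)$ with $v=4x/u^2$, one expands $\Delta^{-(6g-1)/2}=u^{-(6g-1)}\sum_{m\ge0}\binom{m+(6g-3)/2}{m}(4x)^m u^{-2m}$, then expands each $u^{-(6g-1+2m)}=(1+x(1-y))^{-(6g-1+2m)}$ and each power of $(1-y)$ by the binomial theorem. Multiplying by a single numerator monomial $x^{2g+2+t}y^{2+s}$ and extracting $[x^n y^k]$ collapses the resulting sums, via standard Vandermonde-type identities, into the product of three binomial coefficients displayed in (\ref{conjectSnkg}); matching the half-integer binomial $\binom{m+(6g-3)/2}{m}$ and the residual factorials against $(3g-1)!/C(g)$ fixes the global normalization. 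Summing over the monomials of $p^{(g)}$ with coefficients $\chi^{(g)}(t,s)$ then yields exactly the $*_{4g-4}$ expression, completing the reduction.

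The hard part will be the structural claim, and specifically proving that the dressing reconstruction is both \emph{complete} and \emph{of the asserted degree} for every $g$ at once. Finiteness of the irreducible diagrams for each genus is known (Cori--Hetyei), but listing them and controlling the combined effect of all dressings has only been pushed through for $g\le 2$; the explosion in the number of irreducible skeletons is precisely what stalled \cite{Z23} at genus $2$, and it is the genuine obstacle to a uniform proof. A promising alternative, suggested by the shape of the exponent $(6g-1)/2$ and the recurring appearance of the spectral-curve discriminant $\Delta$, is to invoke topological recursion on the curve underlying $S^{(0)}$: if one can show that the genus-$g$ contributions attached to this curve have poles at the branch points of order growing linearly in $g$ at the stated rate, the algebraic form of $S^{(g)}(x,y)$ would follow structurally, bypassing the enumeration of skeletons. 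The finer structural properties of $\chi^{(g)}$ (signed-binomial first column, column symmetry, vanishing of the last $g-1$ row sums) are not needed for the existence statement of the strong form, but any such proof should produce a description of the irreducible data from which they can be verified.
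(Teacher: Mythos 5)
The statement you are asked to prove is presented in the paper only as a \emph{conjecture}: the authors offer no proof, only the evidence that (\ref{conjectSnkg}) reproduces the proven formulas at $g=1,2$ and matches all computer-generated data at $g\ge 3$. So there is no ``paper proof'' to match your argument against; the relevant question is whether your proposal actually closes the gap the authors left open. It does not. Your reduction is logically sound as far as it goes --- if one knew that $S^{(g)}(x,y)$ equals $x^{2g+2}y^2\,p^{(g)}(x,y)/\Delta^{(6g-1)/2}$ with $p^{(g)}$ a polynomial supported on $\{x^ty^s: 0\le s\le t\le 4(g-1)\}$, then the existence of the array $\chi^{(g)}$ would follow, and the coefficient extraction you describe is indeed routine (it is exactly how the paper passes between (\ref{conjectSnkg}) and the G.F.\ Ansatz). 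But that structural claim \emph{is} the conjecture, merely restated in generating-function language, and you leave it unproven: you name two possible routes and acknowledge that neither has been carried out beyond $g=2$.

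Concretely, the missing step is a uniform-in-$g$ argument that the Cori--Hetyei dressing of irreducible diagrams always produces a contribution of the form (polynomial of $x$-degree at most $4(g-1)$, with the triangular $y$-support)$/\Delta^{(6g-1)/2}$, and that the finitely many irreducible diagrams at genus $g$ can be controlled well enough to sum these contributions. The paper itself identifies this as the obstruction: the method of \cite{Z23} ``is limited in genus by the increasing complexity in first listing all irreducible diagrams in genus greater or equal to 3 and then in dressing them.'' Your alternative route via topological recursion glosses over the central difficulty the paper repeatedly emphasizes: the parts of a set partition correspond to cycles of $\tau$ constrained to be \emph{increasing}, and it is precisely this monotonicity constraint that breaks the reduction to ordinary maps (the paper notes that the pairing case $[2^k]$ is solvable for all $g$ exactly because the constraint is vacuous there). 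Unless you can show that the set-partition generating functions satisfy a genuine topological recursion on the curve defined by $\Delta=0$ --- something only partially achieved even for three-part genus-one partitions in the work of Hock cited in the acknowledgments --- the claimed pole structure at the branch points does not follow. As it stands, your proposal is a reasonable research program that correctly locates the difficulty, but it does not constitute a proof, and the statement remains a conjecture.
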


The corresponding Ansatz on the G.F. is
\begin{equation}  \text{G.F.}\qquad S^{(g)}(x,y) \? \frac{x^{2g+2} y^2\, p^{(g)}(x,y)}{( (1 +x -x y)^2 -4x)^{(6g-1)/2}} \end{equation}
with $p^{(g)}(x,y)=\sum_{\substack{0\le t\le 4(g-1) \\ 0\le s \le t}} \chi^{(g)}(t,s) x^t y^s$ .

We have $S^{(g)}(x,1) =B^{(g)}(x)$, $p^{(g)}(x,1)=P^{(g)}(x)$. The latter polynomial 
being (conjectured) of degree $3(g-1)$ in $x$, this tells us that the last $(g-1)$ lines of the array $\chi^{(g)}$
have a vanishing sum. 

A further conjecture, in accordance with the existing data, 
 is that the terms of highest degree  in  $x$, viz $4(g-1)$, of  $p^{(g)}(x,y)$
  are of the form 
  \begin{equation} [p^{(g)}(x,y)]_{x^{4(g-1)}} = (1-y)^{2(g-1)}  \left[ (1-y)^{4g+1}  y^{-2g-3} \sum_{j=0}^{2g-2} \frac{ 2 \; s_{2g+2+j,\, j+1}}{(2g+j+2)(2g+j+1)}  y^{-j}\right]_+ \label{chilastline}\end{equation}
where $[\cdot]_+$ is the polynomial part in $y$ of the expression and   $s_{p,q}$  are the Stirling numbers of the first kind.
See OEIS sequence \seqnum{A185259} where these polynomials are tabulated.
 If true, this conjecture determines the last line $\chi^{(4)}(13 ,s)$ of $\chi^{(4)}$ to be
 $$\{1 ,318 , 6831 ,6072 ,- 99693 , 103950, 
 196581, - 413820 , 155628 , 146168 ,- 117876 , 
 7776 , 8064 \}. $$

\subsubsection{Particular cases: $S_{n,k}^{(3)}$  and  $S_{n,k}^{(4)}$ for small $k$}
The above general conjecture for $S_{n,k}^{(g)}$ leads, when $g = 3, 4$, and small values of $k=2,3,4$, to simple enough formulae that are displayed below. For $k=2$, they follow from (\ref{Sfork=2}). 
One can check that they are compatible with the known (experimental) values of $S_{n,k}^{(3)}$, up to $n=15$, 
(see tables in the appendix).

\bigskip

{ Genus 3.}
\begin{equation*}
\begin{split}
S_{n,2}^{(3)} &=   \scalebox{0.80} {$\binom{n}{{8}}$} =  \scalebox{0.75} {(0, 0, 0, 0, 0, 0, 0, 1, 9, 45, 165, 495, 1287, 3003, 6435, \ldots)}  \quad \text{See OEIS sequence \seqnum{A000581}.} \\
S_{n,3}^{(3)} &\? \scalebox{0.80} {$17 (n-4) \binom{n}{{9}}$} =  \scalebox{0.75} {(0, 0, 0, 0, 0, 0, 0, 0, 85, 1020, 6545, 29920, 109395, 340340, 935935,\ldots)}\\
S_{n,4}^{(3)} &\?  \scalebox{0.80} {$ \frac{5}{3}{(32 n^2-288 n+613)} \binom{n}{10}$} =  \scalebox{0.75} {(0, 0, 0, 0, 0, 0, 0, 0, 0, 1555, 24145, 194150, 1085370, 4759755, 17482465, \ldots)}
\end{split}
\end{equation*}

{ Genus 4.}
\begin{equation*}
\begin{split}
S_{n,2}^{(4) } &= \scalebox{0.80} {$\binom{n}{10}$}= \scalebox{0.75} {(0, 0, 0, 0, 0, 0, 0, 0, 0, 1, 11, 66, 286, 1001, 3003,\ldots)} \quad  \text{See 
OEIS sequence \seqnum{A001287}.}  \\
S_{n,3}^{(4)}  &\? \scalebox{0.80} {$\frac{341}{6} (n-5) \binom{n}{11}$}=\scalebox{0.75} {(0, 0, 0, 0, 0, 0, 0, 0, 0, 0, 341, 4774, 35464, 186186, 775775,\ldots)}\\
S_{n,4}^{(4)}  &\? \scalebox{0.80} {$\frac{11}{2} (65 \, n \, (n-11) +1842) \binom{n}{12}$}=\scalebox{0.75} { (0, 0, 0, 0, 0, 0, 0, 0, 0, 0, 0, 14421, 252538, 2288286, 14369355, \ldots)}
\end{split}
\end{equation*}

\subsection{Partitions with no singletons: associated numbers $\widehat S_{n, k}^{(g)}$}
\label{partwosingl}

\subsubsection{Genus $0$} See OEIS sequence \seqnum{A108263}.
\begin{equation} \widehat S_{n, k}^{(0)} =  \frac{1}{(n - k + 1)} {\binom{n - k - 1}  { n - 2 k}} \, {\binom{n} {k}}\, .\end{equation}
\begin{equation}{ \text{The ordinary G.F. is } \frac{1+x-\sqrt{(1-x)^2-4 x^2 y}}{2 x (x y+1)} } .\end{equation}

\subsubsection{Genus $1$}
\begin{equation}\widehat S_{n, k}^{(1)} = \frac{1}{6} \, {\binom{k}  {2}} \,  {\binom{n}  {k}}  \, {\binom{n-k}  {k}} . \end{equation}
\begin{equation} \text{The ordinary G.F. is    }     \frac{x^4 y^2}{((1 -  x)^2  - 4 x^2 y)^{5/2}}.\end{equation}

\subsubsection{Genus $2$ and above} 
 It is conjectured that for generic genus $g>0$, the G.F. has the form
\begin{equation}   \widehat S^{(g)}(x,y) {\buildrel ?\over =} \frac{x^{2g+2} y^2 \hat p^{(g)}(x,y)}{((1 -  x)^2  - 4 x^2 y)^{(6g-1)/2}}\end{equation}
with $\hat p^{(g)}(x,y)$ a polynomial  {of degree   $4(g-1)$ in $x$}. 
For instance 
\begin{align}\hat p^{(2)}(x,y) &= 1 + 2 x (-2 + 7 y)  + 
 x^2 (6 - 22 y + 21 y^2) \nonumber \\
 & +x^3 (-4 + 2 y + 7 y^2) + x^4 (1 + 6 y - 19 y^2 + 21 y^3),
\end{align}
as derived in \cite{Z23}.
 
\begin{remark}
 Note that  $p^{(2)}(x,0)=\hat p^{(2)}(x,0)=(1-x)^4$ so that the term of order $y^2$ in $ S^{(2)}(x,y)$ or $ \widehat S^{(2)}(x,y)$,
 \ie the G.F. of genus 2 partitions into two parts with or without singleton, is $x^6 /(1-x)^7=\frac{1}{2} \sum_{n={6}}^\infty \frac{n}{3} x^n \sum_{p=1}^{n-1} \binom{p-1}{ 2} \binom{n-p-1}{ 2}$, in agreement with formula 
 (\ref{conjpn-p}) below. 
 
  By the same token, we may assert  that  $ p^{(g)}(x,0)=\hat p^{(g)}(x,0)=(1-x)^{4(g-1)}$ so that the term of order $y^2$ in $ S^{(g)}(x,y)$ or $ \widehat S^{(g)}(x,y)$ is 
  \[ x^{2g+2} /(1-x)^{2g+3}=\frac{1}{2} \sum_{n=2g}^\infty \frac{n}{g+1}  x^n \sum_{p=1}^{n-1} \binom{p-1}{ g} \binom{n-p-1}{ g}.\]
   This implies that 
   \begin{equation}\widehat S_{n, 2}^{(0)} =  S_{n, 2}^{(0)}  -n = \frac{1}{2} n (n-3) \quad \text{and}  \quad   \widehat S_{n, 2}^{(g)} = S_{n, 2}^{(g)} = \binom{n}{ 2g+2} \; \text{for}\; g >0\end{equation}
 in agreement with the result (\ref{Sfork=2}).
  These numbers can be recognized as the elements of the array in OEIS \seqnum{A275514}.
  \end{remark}

\subsubsection{Particular case $n=2k$} \label{sec:HZ:wideStirling}
Since we assume in this section that there are no singletons and since $k$ denotes the number of parts, the equality $n=2k$ implies that each part is of length $2$, so that the type is determined,  $[\alpha] = [2^k]$, and {$\widehat S_{2k,k}^{(g)}  = C_{2k, [2^k]}^{(g)}$}.
Because of this coincidence, we postpone the study of this particular case to the next section, which is devoted to the study of coefficients $C_{n, [\alpha]}^{(g)}$.

\section{Genus-dependent Fa\`a di Bruno coefficients $C_{n, [\alpha]}^{(g)}$. \\  Part I. Fully solved cases}
\label{sect5}

 The genus-dependent Fa\`a di Bruno coefficients $C_{n, [\alpha]}^{(g)}$ are  explicitly known in many specific cases, for particular types  $[\alpha] = [\cdots \ell^{\alpha_\ell} \cdots]$ and/or for particular values of the genus $g$, most of them discussed and summarized in Section \ref{sect6}.
However, to  the best of our knowledge, they are generically known in only three families of cases, two of them are classic---the cases of genus 0, for  all types, 
and the partitions of type $[2^p]$, for all  $g$---and the third one is new, the partitions into two parts, \ie of type $[p,n-p]$, for all $g$.
We review these three cases in this section. 
In addition, the G.F. of all types of partitions are explicitly known for genus 1 and 2  (see \cite{Z23}), 
although the extraction of explicit formulae for the Fa\`a di Bruno coefficients is arduous.

\subsection{The particular case $g=0$}

$C_{n,[\alpha]}^{(0)}$ is the number of {\it noncrossing} partitions (also called planar partitions) of type $[\alpha]$.
\begin{equation} \label{genus0}C_{n,[\alpha]}^{(0)}=\frac{n!}{(n+1-\sum \alpha_\ell)!  \ \prod_\ell \alpha_\ell!}=\inv{n+1}  \binom{n+1}{ \alpha_1,\ldots ,\alpha_{n}, n+1-\sum \alpha_j}\
 \end{equation}
 where the symbol $({}_{\cdots}^{\cdots})$ denotes a multinomial coefficient. 
 It was first derived by Kreweras \cite{Krew}, and reappeared later in the context of large random matrices
 \cite{BIPZ} and of free probabilities and their free (or noncrossing) cumulants \cite{Speicher}.
 One may also collect these expressions into a G.F. 
\begin{equation}\label{defZ0} Z^{(0)}(x)=1+\sum_{n\ge1} x^n\sum_{[\alpha]\vdash n}C_{n,[\alpha]}^{(0)}  \prod_{\ell =1}^n \kappa_\ell^{\alpha_\ell}\end{equation}
 where the $\kappa_\ell$ are new indeterminates, from which we may also 
 construct the function 
 \begin{equation}\label{defW} W(x)=\sum_{\ell \ge1} \kappa_\ell x^\ell. \end{equation} 
 Then, it was shown in \cite{BIPZ}  that (\ref{genus0}) 
 is equivalent to the following functional relation between $Z^{(0)}$ and $W$
 \begin{equation}\label{eqZ0gen}  Z^{(0)}(x)=1+ W(x\, Z^{(0)}(x)). \end{equation} 
 Also see \cite{Cvitanovic} for a nice graphical interpretation of that identity.

 \subsection{Genus 1 and 2}As recalled above, the genus 1 and 2 G.F. have been constructed in \cite{Z23}. 
  We shall use them in the following to substantiate some remarks and conjectures. 
For illustration, we recall here the expression of the G.F. in genus 1. Let
 \begin{equation} V(x)=xW'(x)\, ,\quad  X_2(x)= x W'(x)-W(x)\, ,\quad Y_2(x)= \frac{1}{2} x^2 W''(x)\, ,\quad \tilde x=x Z^{(0)}(x).\end{equation}
 Then
 \begin{equation}\label{genus1} Z^{(1)}(x) = \frac{X_2(\tilde x) Y_2(\tilde x)}{(1-X_2(\tilde x))^4(1-V(x))}. \end{equation}
 The more cumbersome expression of  $Z^{(2)}(x)$ will not be recalled here.
 
 Note that even though we have explicit expressions of their G.F., the determination of Fa\`a di Bruno 
 coefficients for $g=1$ or 2 is still implicit, contrary to formulae (\ref{genus0}).

\subsection{Type $[\alpha]=[2^k]$}  So $n=2k$ ($k$ parts of length $2$) and $g \le \frac{k}{2}$.
\label{sec:HZ}

If we focus on the terms with $[\alpha]=[2^k]$, it suffices to specialize the indeterminates $\kappa$ to
$\kappa_\ell=\kappa_2 \delta_{\ell,2}$. By a small abuse of notation, we still use $Z^{(g)}(x)$ and $W(x)$ for these
specialized G.F. 
As already mentioned { and explained at the end of Section \ref{partwosingl}}, $C_{2k, [ 2^k]}^{(g)}$ is known for all $g$ and coincides with $\widehat S_{2k,k}^{(g)}$. 
This famous case was first solved by Walsh and Lehman \cite{WL1, WL2} by combinatorial methods;  then 
 by Harer and Zagier \cite{HZ}, in the context of the virtual Euler characteristics of the moduli space of curves, by means of matrix integrals;  and by Jackson by a character theoretic approach \cite{Jackson}. It has been the object of
 an abundant literature since then, {a good review of which is given in \cite{LZ}. Also see \cite{Chapuy-th}. The reason for which  this case can be solved for arbitrary genus is that the 
 crucial constraint of monotonicity of the cycles of $\tau$ is here irrelevant, and we are just dealing with ordinary maps. One finds that}
\begin{equation}
\widehat S_{2k,k}^{(g)}  = C_{2k, [ 2^k]}^{(g)}= \frac{(2k)!}{(k+1)! (k-2g)!} \left[\left(\frac{u/2}{\tanh u/2}\right)^{k+1}\right]_{u^{2g}}
\label{HZeq}
\end{equation}
 where the notation $[Y]_{u^k}$ means the coefficient of $u^k$ in expression $Y$.
The first few terms are given in Table \ref{table:S2kkgValues}.
 
 \begin{table}[ht]
 \begin{center}
{\small
\begin{tabular}{l | c| c|  c| c|  c| c|  c| c|  c| c| }
$\ \hfill\hfill g$ & 0 & 1 & 2 & 3 &4 \\ 
\hline
$k=1$ & 1 &&&& \\
$k=2$ & 2 & 1&&& \\
$k=3$ & 5 & 10 &&& \\
$k=4$ &14 & 70 & 21&& \\
$k=5$ &42 & 420 & 483&& \\
$k=6$ &132 & 2310 & 6468 & 1485& \\
$k=7$ &429 & 12012 & 66066 & 56628& \\
$k=8$ & 1430 &60060 & 570570 & 1169740 & 225225
\end{tabular}}
\end{center}
\caption{Values of  $\widehat S_{2k,k}^{(g)}$.}
\label{table:S2kkgValues}
\end{table}

The $g=0$ column is, by (\ref{genus0}): $C_{2k, [2^{k}]}^{(0)} =\inv{k+1} \binom{2k}{ k} = {\mathcal C}_k$ (Catalan numbers)\footnote{It therefore  coincides with~$B_k^{(0)}$.}, whose G.F. is 
\begin{equation} Z^{(0)}(u)=1+  \sum_{k=1}^\infty C_{2k, [2^{k}]}^{(0)}    u^k= \frac{1-\sqrt{1-4u}}{2u}\end{equation}
which satisfies 
\begin{equation}\label{eqZ0} Z^{(0}(u) =1 + u (Z^{(0}(u) )^2.\end{equation}
(This is the equation (\ref{eqZ0gen}) expressed here for $W(x)=\kappa_2 x^2$ in the variable $u=\kappa_2 x^2$.)
The $g=1$ column  is $\frac{(2k-1)!}{6 (k-2)!(k-1)!}= \binom{2k-1}{ 3} \, {\mathcal C}_{k-2} = \frac{(k+1)k(k-1)}{12} \, {\mathcal C}_k$.  See OEIS sequence \seqnum{A002802}.
The $k$-th row's sum is, by (\ref{faadiBruno}), given by  $(2k-1)!!$, viz $\{1, 1,3,15, 105,945,\ldots\}$. 
More generally,  
\begin{equation}\label{polR} C_{2k, [2^{k}]}^{(g)} =\inv{2^g }\, {\mathcal C}_k \, R_g(k)\end{equation} with $R_g(k)$ a polynomial of degree $3g$ in $k$~\cite{WL1, HZ} which 
(for $g>0$) vanishes for $k=-1,0,\ldots, 2g-1$, and whose form can be made explicit \cite{GoupilSch, Chapuy-th}.
There are several expressions for the values that it takes when its argument $k$ is an arbitrary non-negative integer. 
One of them, in terms of  {\sl unsigned} Stirling numbers of the {\sl first} kind  $c_{p,q}$\footnote{Here $c_{p,q}$ is the number of permutations of $p$ elements that have $q$ distinct cycles.  
They are positive integers such that $s_{p,q}=(-1)^{p-q} \, c_{p,q}$ where the $s_{p,q}$, the Stirling numbers of the first kind, obey 
$\sum_{p\ge 1} S_{n,p}\, s_{p,q} = \delta_{n,q}$.}, are as follows  \cite{ChenReidys}:
\begin{equation} R_g(k) = \sum_{s=0}^k  {\binom{k}  {s}} \sum_{j=0}^{k+2-2g}\, (-1)^{s+1-j}  \;  c_{k - s + 1, k + 2 - 2 g - j} \;  c_{s+1,j}.\end{equation}

As noticed in \cite{HZ}, the numbers $C_{2k, [ 2^k]}^{(g)}$ (called $\epsilon_g(k)$ there) satisfy a recurrence formula
\begin{equation} (k+1) \label{recur-HZ}  C_{2k, [ 2^k]}^{(g)} = 2(2k-1)C_{2(k-1), [ 2^{k-1}]}^{(g)}+\inv{2} (2k-1)(2k-2)(2k-3) C_{2(k-2), [ 2^{k-2}]}^{(g-1)}\end{equation}
or in terms of the polynomials $R$ introduced in (\ref{polR}),
\begin{equation} R_g(k)=R_g(k-1) +\binom{k}{ 2} R_{g-1}(k-2).\end{equation}
{In \cite{Chapuy-th}, Chapuy gave a combinatorial interpretation of that formula, from which 
he derived another recurrence equation
\begin{equation} 2g C_{2k, [ 2^k]}^{(g)} =\sum_{h=1}^{g}  \binom {k+2h+1-2g}{2h+1} C_{2k, [ 2^k]}^{(g-h)}.\end{equation}} 

\begin{proposition}
 The generating function of the $C_{2k, [2^k]}^{(g)}$ for $g>0$ is of the form
\begin{equation}\label{GFHZ}{Z}^{(g)}(u) :=\sum_{k} C_{2k, [2^k]}^{(g)} u^k =   \frac{{u^{2g}}\, Q^{(g)}(u)}{(1-4 u)^{(6g-1)/2}},\end{equation}
where $Q^{(g)}(u)$ is a polynomial of degree $g-1$ in $u$ satisfying 
\begin{equation} \label{value0} Q^{(g)}(0)=  \frac{(4 g)!}{2^{2 g}(2 g+1)!}.\end{equation}
\label{oldprop1}
\end{proposition}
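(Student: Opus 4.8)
The plan is to reduce the statement to the action of the Euler operator $\theta = u\,\frac{d}{du}$ on the elementary generating function $\sum_{k}\binom{2k}{k}u^k = (1-4u)^{-1/2}$, starting from the product formula (\ref{polR}). First I would combine (\ref{polR}), namely $C_{2k,[2^k]}^{(g)} = \frac{1}{2^g}\,\mathcal{C}_k\,R_g(k)$, with the fact (from \cite{WL1, HZ}) that $R_g$ has degree exactly $3g$ and vanishes at the $2g+1$ integers $k=-1,0,1,\dots,2g-1$. These roots let me factor $R_g(k) = (k+1)\,k\,(k-1)\cdots(k-2g+1)\,\tilde R_g(k)$ with $\deg\tilde R_g = g-1$, and since $\mathcal{C}_k = \frac{1}{k+1}\binom{2k}{k}$ the factor $(k+1)$ cancels, giving
\begin{equation} C_{2k,[2^k]}^{(g)} = \frac{1}{2^g}\binom{2k}{k}\,P(k), \qquad P(k) := \frac{R_g(k)}{k+1}, \quad \deg P = 3g-1. \end{equation}

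Next I pass to generating functions. Because $\theta$ sends the coefficient $a_k$ to $k\,a_k$, one has $\sum_k \binom{2k}{k}P(k)\,u^k = P(\theta)\,(1-4u)^{-1/2}$. The elementary closure relation $\theta(1-4u)^{-s} = s\,(1-4u)^{-s-1} - s\,(1-4u)^{-s}$ (obtained by writing $4u = 1-(1-4u)$) shows that $\theta$ raises the exponent by one half-integer step, so $P(\theta)(1-4u)^{-1/2}$ is a linear combination of the functions $(1-4u)^{-(2j+1)/2}$ for $0\le j\le 3g-1$. Putting these over the common denominator $(1-4u)^{(6g-1)/2}$ (note $2(3g-1)+1 = 6g-1$) yields
\begin{equation} Z^{(g)}(u) = \frac{A(u)}{2^g\,(1-4u)^{(6g-1)/2}}, \qquad \deg A \le 3g-1, \end{equation}
with $A$ a polynomial. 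Since $(1-4u)^{-(6g-1)/2}$ is a power series with constant term $1$, the order of vanishing of $Z^{(g)}$ at $u=0$ equals that of $A$; but the genus bound (\ref{boundonng}) with $n=2k$ forces $C_{2k,[2^k]}^{(g)}=0$ for $k<2g$, so $Z^{(g)}$ starts at $u^{2g}$ and hence $u^{2g}\mid A$. Setting $Q^{(g)}(u) := A(u)/(2^g u^{2g})$ therefore produces a polynomial of degree at most $(3g-1)-2g = g-1$, which is the form (\ref{GFHZ}).

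For the constant term (\ref{value0}) I read off the coefficient of $u^{2g}$ on both sides of (\ref{GFHZ}): since the remaining factor has constant term $1$, $Q^{(g)}(0) = [u^{2g}]Z^{(g)} = C_{4g,[2^{2g}]}^{(g)}$. I then evaluate this single number from the Harer--Zagier formula (\ref{HZeq}) at $k=2g$, where $(k-2g)!=1$, giving $C_{4g,[2^{2g}]}^{(g)} = \frac{(4g)!}{(2g+1)!}\big[(\tfrac{w/2}{\tanh(w/2)})^{2g+1}\big]_{w^{2g}}$. Substituting $v=w/2$ reduces the claim to the residue identity $\big[(\tfrac{v}{\tanh v})^{2g+1}\big]_{v^{2g}} = \mathrm{Res}_{v=0}\coth^{2g+1} v = 1$. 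This last equality follows by induction from the exact-derivative relation $\coth^{2g+1}v - \coth^{2g-1}v = -\frac{1}{2g}\frac{d}{dv}\coth^{2g}v$ (a residue is insensitive to exact derivatives), which collapses $\mathrm{Res}_{v=0}\coth^{2g+1}v$ down to $\mathrm{Res}_{v=0}\coth v = 1$; this gives exactly $Q^{(g)}(0) = \frac{(4g)!}{2^{2g}(2g+1)!}$.

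The main obstacle is upgrading $\deg Q^{(g)}\le g-1$ to the exact equality $\deg Q^{(g)} = g-1$, i.e. showing the top coefficient does not vanish. Tracking the least-singular term through the $\theta$-calculus, the coefficient of $(1-4u)^{-1/2}$ in $P(\theta)(1-4u)^{-1/2}$ equals $P(-1/2)$ (each application of $\theta$ multiplies that coefficient by $-1/2$), so the leading coefficient of $Q^{(g)}$ is a nonzero multiple of $P(-1/2) = 2R_g(-1/2)$; the exact degree is thus equivalent to $R_g(-1/2)\neq 0$. I expect to settle this either from the explicit closed form of $R_g$ available in \cite{GoupilSch, Chapuy-th}, or by rewriting $R_g(-1/2)$ through (\ref{HZeq}) as a nonzero falling factorial times a Taylor coefficient of $(\tfrac{v}{\tanh v})^{1/2}$. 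Everything else in the argument is routine bookkeeping once the degree of $R_g$ and its integer roots are granted.
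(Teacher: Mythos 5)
Your argument is correct and reaches the proposition by a genuinely different route from the paper. The paper's proof translates the Harer--Zagier three-term recurrence (\ref{recur-HZ}) into a third-order linear differential recurrence $\mathcal{D}Q^{(g)} = \widehat{\mathcal{D}}Q^{(g-1)}$ in the variable $u$, and establishes polynomiality of degree $g-1$ by induction on $g$, which requires verifying several cancellation identities for the would-be top-degree coefficients of the right-hand side and then solving for the coefficients $q_r$ one at a time; the value $Q^{(g)}(0)$ drops out of the recurrence at $u=0$. You instead take as input the structural fact (\ref{polR}) that $R_g$ has degree $3g$ and vanishes at $k=-1,0,\dots,2g-1$, cancel the root at $k=-1$ against the Catalan denominator, and run the Euler-operator calculus $P(\theta)(1-4u)^{-1/2}$ with the step relation $\theta(1-4u)^{-s}=s(1-4u)^{-s-1}-s(1-4u)^{-s}$; combined with the genus bound $k\ge 2g$ this yields the form (\ref{GFHZ}) with $\deg Q^{(g)}\le g-1$ in a few lines, and your residue computation $\mathrm{Res}_{v=0}\coth^{2g+1}v=1$ gives (\ref{value0}) directly from (\ref{HZeq}) rather than from the recurrence. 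What your approach buys is a much lighter computation and a transparent explanation of the exponent $(6g-1)/2$ as $2\cdot(3g-1)+1$ halves; what it costs is reliance on the external facts about the degree and integer roots of $R_g$ (which the paper itself quotes from \cite{WL1, HZ}, so this is legitimate). One caveat: your proof as written only gives $\deg Q^{(g)}\le g-1$, and you correctly reduce strict equality to $R_g(-1/2)\neq 0$ without finishing that point --- but the paper's own induction is equally silent on the nonvanishing of $q_{g-1}$, so you are not behind the paper here; you have merely made the residual condition explicit.
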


\begin{proof} 
One finds by explicit calculation that $Q^{(1)}(u)=1$. Equation (\ref{recur-HZ})  implies that $Q^{(g)}(u)$ satisfies the following 
recurrence formula
\begin{align} \nonumber &  \!\!\!\!\!\!{\scriptstyle (1-4 u) u \frac{d}{du}Q^{(g)}(u)+\big(   2g+1 +4 u(g-1)\big) Q^{(g)}(u) }=\\
\nonumber &
{\scriptstyle  \big(3 \binom{4g-1}{ 3} +6 (-1 + 4 g) (1 - 14 g + 8 g^2)u +96 (g-2 ) ( 4 g^2 - 8 g -1) u^2
+128(g-2)(g-3)(2g-5)u^3 \big)Q^{(g-1)}(u)}  \\
  \nonumber  &
  {\scriptstyle + \big( 48 g^2  - 24 g +3+ 24( 8 g^2 - 20 g -1  ) u + 
192 (g-3)^2   u^2\big)   u(1-4u) \frac{d}{du}Q^{(g-1)}(u)} \\
  \label{recureq} &
   {\scriptstyle   +24 \big(g +u(2g - 7) \big) u^2(1-4u)^2 \frac{d^2}{du^2}Q^{(g-1)}(u) + 4u^3(1-4u)^3 \frac{d^3}{du^3}Q^{(g-1)}(u) }
\end{align}
that we abbreviate by
$$ {\mathcal D} Q^{(g)}(u) = \widehat{\mathcal D} Q^{(g-1)}(u) $$
with two linear differential operators in the variable $u$, ${\mathcal D}$   and $ \widehat{\mathcal D}$.
Equation (\ref{recureq}) for $u=0$ fixes the ratio   $Q^{(g)}(0)/Q^{(g-1)}(0) =\frac{(4 g-3) (4 g-2) (4 g-1)}{2 (2 g+1)}$, which leads to the expression 
(\ref{value0}) above. 
  The proof of (\ref{GFHZ}) is obtained by induction assuming that $Q^{(g-1)}(u)$ is a polynomial of degree $g-2$. 
The polynomiality property  of  $Q^{(g)}(u)$ is then proved as follows: the rhs of (\ref{recureq}) might seem to be of degree $g+1$, but in fact its degree is only $g-1$ because of trivial identities
\begin{align}\nonumber
&{\scriptstyle \left[ \widehat{\mathcal D}  u^{g-2}\right]_{u^{g+1}}=128(g-2)(g-3)(2g-5)-4 \times 192 (g-2)(g-3)^2 +4^2\times 24 (2g-7) (g-2)(g-3) +4\times(-4)^3 (g-2)(g-3)(g-4)}
\\ \nonumber
&\qquad\qquad\  = {\scriptstyle128 (g-2)(g-3) \Big(   (2g-5) - 6  (g-3)   +3 (2g-7) -2 (g-4)\Big) \equiv 0}\\[4pt]
&{\scriptstyle  \left[ \widehat{\mathcal D}  u^{g-3}\right]_{u^{g}}= 128(g-2)(g-3)(2g-5)-4 \times 192 (g-3)^3 +4^2\times 24 (2g-7) (g-3)(g-4) +4\times(-4)^3 (g-3)(g-4)(g-5) \equiv 0}\\  \nonumber 
& {\scriptstyle  \left[ \widehat{\mathcal D}  u^{g-2}\right]_{u^{g}}=96 (g-2) \left(4 g^2-8 g-1\right) +(g-2) \left(192 (g-3)^2-96 \left(8 g^2-20 g-1\right)\right)+1344 (g-3) (g-2) +192 (g-4) (g-3) (g-2) \equiv 0},\end{align}
using the notation introduced in (\ref{HZeq}). 
Likewise, in the lhs,  $\left[ {\mathcal D}  u^{g-1}\right]_{u^{g}}\equiv 0$, so that it is consistent to look for a polynomial solution of degree $g-1$ for $Q^{(g)}(u)$.
Write $ Q^{(g)}(u) =\sum_{r=0}^{g-1}  q_r u^r$. Then equation (\ref{recureq}) 
recursively determines the coefficients $q_r$ in terms of those of $Q^{(g-1)}(u)$
$$ \left[{\mathcal D} Q^{(g)}(u)\right]_{u^r}  =  (2g+1+r) q_r + 4 (g-r) q_{r-1} = \left[{\mathcal D} Q^{(g-1)}(u)\right]_{u^r} \quad \mathrm{for}\ 1\le r \le g-1.$$
Together with the value of $q_0= Q^{(g)}(0)$ given above, this completely determines all  coefficients $q_r$ and completes the proof that 
$Q^{(g)}(u)$ is a polynomial.\end{proof}

\medskip
Note that the expression $(1-4u)$ in the denominator of (\ref{GFHZ})  
is---once again---nothing else than the discriminant of the equation (\ref{eqZ0}) satisfied by $Z^{(0)}$. 

The first few $Q^{(g)}$ are
\begin{align} \label{GFHZpol} \nonumber Q^{(1)}(u)=1\ ;\quad Q^{(2)}(u)=21(1+u)\ ;\quad Q^{(3)}(u)= 11(135 +558 u + 158u^2)\ ;\\
 Q^{(4)}(u)=11\times 13 (1575 + 13689 u +18378 u^2 +2339 u^3)\ ;\qquad\qquad\\  \nonumber
 Q^{(5)}(u)=  3 \times 13 \times 17 \times 19 \, (4725 + 67620 u+ 201348 u^2 +132356 u^3 +9478 u^4)\ ;\, \cdots  \end{align}
  $\text{with} \quad Q^{(g)}(0)=\{1, 21, 1485,225225,\ldots\} = C_{4g, [2^{2g}]}^{(g)} = \inv{2^g }\, {\mathcal C}_{2g} \, R_g(2g) =  \frac{(4 g)!}{2^{2 g}(2 g+1)!}.$\\
 See OEIS sequence \seqnum{A035319}.

\bigskip

\subsection{Partitions into two parts: type $[\alpha]={[p, n-p]}$} 
\label{2part-partitions}

In this section, we make use of both the description of   partitions by pairs $(\sigma, \tau)$ of permutations and the diagrammatic representation presented in Section \ref{sectdefgenus}  and apply them
 to the case of a partition of $\{1,\ldots,n\}$ into two parts of size $p$ and $n-p$. Thus in the diagrammatic representation, 
 $n$ points lie on the circle, numbered and clockwise ordered 
 from 1 to $n$, while two vertices of valence $p$ and $n-p$, associated by the two increasing cycles of $\tau$ are inside the disk, 
 and their edges connect to the points on the circle, 
 {\it with no crossing of edges originating from the same vertex}.  The faces of the map correspond to the cycles of the 
 permutation $\sigma\circ \tau^{-1}$. 

\begin{example}
 Figure \ref{Fig1Comp}(a) shows a diagrammatic representation of a partition  of
 $\scalebox{0.95}{\{1,\ldots,12\}}$   into two parts $\alpha=\scalebox{0.95}{(\{1,4,5,8,10\},\{2,3,6,7,9,11,12\})}$. 
  Thus  $\tau=\scalebox{0.95}{(1,4,5,8,10),(2,3,6,7,9,11,12)}$,  $\sigma=\scalebox{0.95}{(1,2,\ldots,12)}$,
and  the product $\sigma \circ \tau^{-1} = \scalebox{0.95}{(1,11,10,9,8,6,4,2)(3)(5)(7)(12)}$ has 5 cycles.
 The map has $f=5$ faces, which can also be seen by following the circuit indicated by arrows on the figure,
  and its genus is thus~$3$  by applying (\ref{genus}).
\begin{figure}[!ht]
\begin{center}
\includegraphics[width=.6\textwidth]{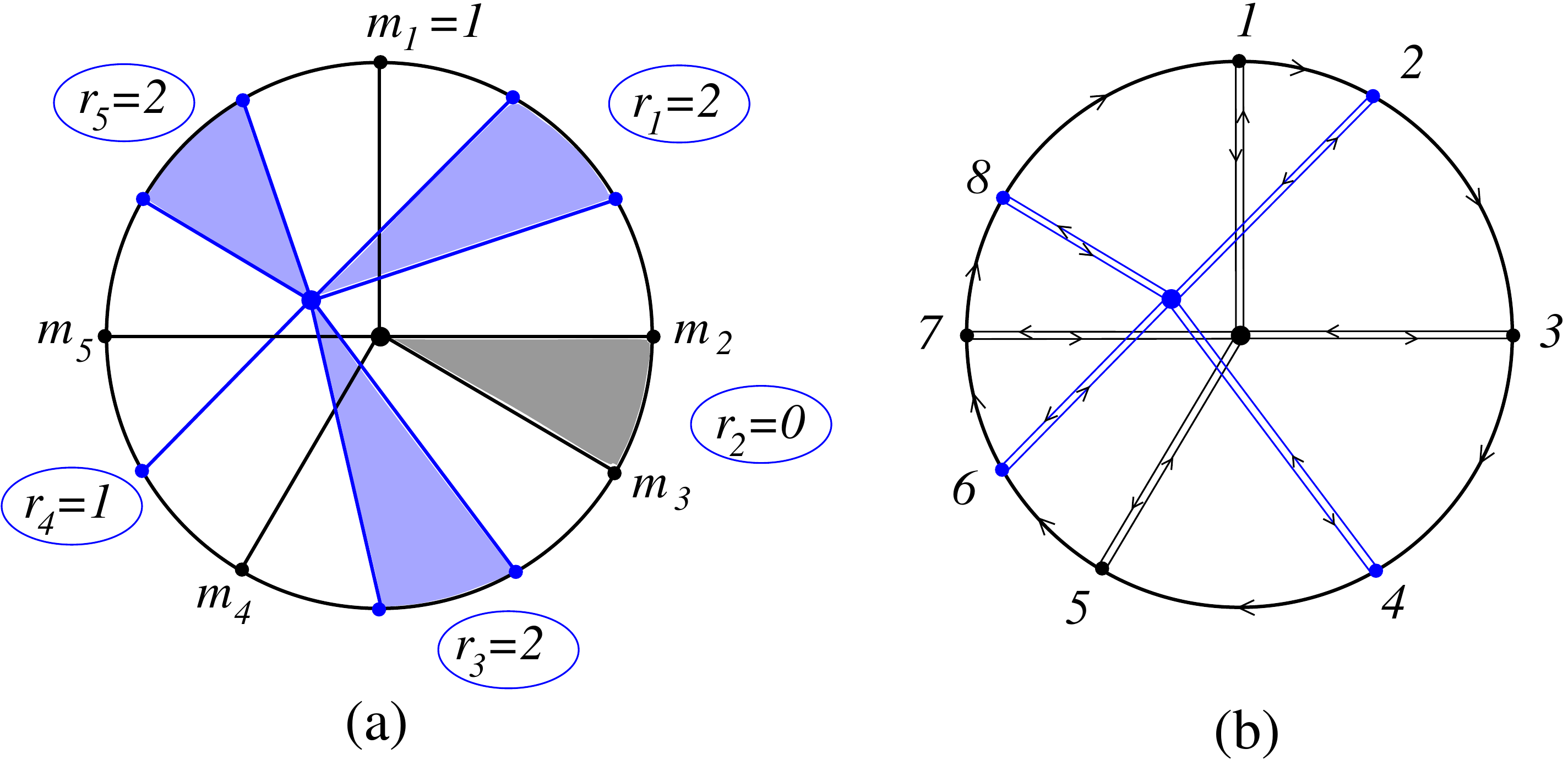}
\end{center}
\caption{(a) Partition with $n=12$, $p=5$, $f=5$, $s_1=1$, $s_2=3$, $g=3$;  
(b)  removing shaded faces, \ie singletons of $\sigma\circ \tau^{-1}$,  relabeling the points
and  doubling the edges to make the one cycle (or face) $(1,8,7,6,5,4,3,2)$  more visible,  $f'=1$.}
\label{Fig1Comp}
\end{figure}
\end{example}
  
Let  $C_1$ and $C_2$ denote the two (increasing) cycles of $\tau$,   $m_1 < m_2 <\cdots < m_p$  the $p$ elements of  $C_1$, 
and  $p_1 <p_2<\cdots<p_{n-p}$ those of $C_2$. 
(On the figure, we chose $m_1=1$ and the corresponding points on the circle are black, those of $C_2$ are blue.)  

Let us consider a singleton $x$ of $\sigma\circ\tau^{-1}$:  $x = \sigma(\tau^{-1}(x)) =\tau^{-1}(x)+1$, \ie $x=\tau(x-1)$, meaning that $x-1$ and $x$ 
belong to the same cycle of $\tau$. Let $s_1$, resp., $s_2$, be the number of such singletons in cycle $C_1$, resp., $C_2$. Diagrammatically, 
they count the numbers of pairs of edges exiting the same vertex and reaching  consecutive points on the circle,
or the numbers of grey, resp., blue shaded faces on the figure.   
Let $r_i$ be the number of  integers strictly between $m_i$ and $m_{i+1}$, $i=1,2,\ldots,p$, 
(with the convention $m_{p+1}\equiv m_1$). Thus $\sum_i r_i= n-p$.  
For each of the $s_1$ indices $i$ for which $r_i=0$, (\ie $m_{i+1}=m_i+1$), 
there is a singleton of  $\sigma\circ\tau^{-1}$ in $C_1$  (and the associated face on the figure is grey shaded); 
 for each $i$ for which  $r_i\ge 1$, there are $r_i-1$ singletons in 
$C_2$ (\ie blue faces attached to the second vertex). 
In our example of  Fig.~\ref{Fig1Comp} there are respectively one and three such faces. 
The total number of singletons (= shaded faces) is thus
$$  \sum_{\substack{i \\ r_i=0}}1 + \sum_{\substack{i\\ r_i\ge 1}}(r_i-1) = s_1 +  \big((n-p) -(p-s_1)\big) = n-2p+2 s_1.$$
Interchanging the roles of $C_1$ and $C_2$ tells us that it is also equal to $n-2(n-p) +2s_2$, hence that
$$ s_2= s_1+ n-2p.$$ 
In addition to these singletons, $\sigma\circ\tau^{-1}$ has $f'$ other cycles (= unshaded faces), and a total of 
$ f= s_1 + s_2 +f'= n-2p+2s_1+f'$ faces, whence the genus 
\begin{equation} g= \frac{n-1-f}{2}= p- s_1-\frac{f'+1}{2} \le p-1. \qquad \qquad\end{equation}
or by symmetry between the two vertices, $g\le n-p-1$, which
shows that $g\le \min(p-1, n-p-1)$.
Clearly the minimum value $g=0$ is reached for $f'=1,\ s_1=p-1$, \ie with the $p$-vertex being attached to $p$ consecutive points on the circle.
In fact  the  previous inequality $f'\ge 1$  is  an equality.  In other words, we have, with the previous notation, the following lemma:
\begin{lemma} 
\label{lemmalabelfprime}
For any partition of type $[p,n-p]$, we have $f'=1$,  $g=p-s_1-1= (n-p) -s_2-1$.\end{lemma}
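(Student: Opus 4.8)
The plan is to lean on the relation $g = p - s_1 - (f'+1)/2$ together with $s_2 = s_1 + n - 2p$, both already derived just above the statement, so that the whole lemma collapses to the single assertion $f' = 1$: the permutation $\rho := \sigma\circ\tau^{-1}$ has exactly one cycle of length $\ge 2$, all its remaining cycles being the singleton (shaded) faces. First I would record the exact criterion for a singleton, as noted in the text: $y$ is fixed by $\rho$ iff $\tau^{-1}(y) = y-1 \pmod n$, which holds iff $y$ and $y-1$ (cyclically, with $0 \equiv n$) lie in the same part. Hence the \emph{non}-singletons of $\rho$ are precisely the positions $y$ at which the part changes as one reads $1,2,\ldots,n$ cyclically, i.e.\ the ``starts'' of the maximal runs of consecutive integers belonging to one common part. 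Calling such a run a \emph{block} and writing $b$ for the number of $C_1$-blocks, one checks that there are equally many $C_2$-blocks (the two parts alternate around the circle, both being non-empty), that $s_1 = p-b$ and $s_2 = (n-p)-b$, and therefore that $\rho$ has exactly $2b$ non-singletons.

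The heart of the argument is to compute the action of $\rho$ on these $2b$ block-starts and show they form one cycle. I would label the blocks in cyclic order as $\beta_1,\gamma_1,\beta_2,\gamma_2,\ldots,\beta_b,\gamma_b$, with $\beta_j$ the $C_1$-blocks and $\gamma_j$ the $C_2$-blocks, chosen so that $\gamma_j$ immediately follows $\beta_j$. If $y$ is the start of $\beta_j$, then $\tau^{-1}(y)$ is its $C_1$-predecessor, namely the largest element of the previous $C_1$-block $\beta_{j-1}$; adding $1$ lands on the start of the $C_2$-block lying between $\beta_{j-1}$ and $\beta_j$, which is $\gamma_{j-1}$. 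Thus $\rho(\mathrm{start}\,\beta_j) = \mathrm{start}\,\gamma_{j-1}$, and by the symmetric computation $\rho(\mathrm{start}\,\gamma_j) = \mathrm{start}\,\beta_j$ (indices mod $b$). Composing, the orbit of $\mathrm{start}\,\beta_1$ runs through
$$\mathrm{start}\,\beta_1 \to \mathrm{start}\,\gamma_b \to \mathrm{start}\,\beta_b \to \mathrm{start}\,\gamma_{b-1} \to \cdots \to \mathrm{start}\,\gamma_1 \to \mathrm{start}\,\beta_1,$$
visiting all $b$ of the $\beta$'s and all $b$ of the $\gamma$'s before closing. Hence the $2b$ non-singletons form a single $\rho$-cycle, so $f'=1$; substituting into $g = p-s_1-(f'+1)/2$ yields $g = p-s_1-1 = (n-p)-s_2-1$, as claimed. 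As a sanity check, in Figure~\ref{Fig1Comp} one has $b = p-s_1 = 4$, and the single reduced face indeed has length $2b = 8$.

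The step I expect to be the main obstacle is bookkeeping rather than conceptual: making the ``predecessor-of-a-block-start, plus one, equals the start of the intervening block'' claim airtight, including the wrap-around at the junction $n\to 1$ (where the block containing $1$ may merge with the block ending at $n$) and the degenerate case $b=1$. I would handle the wrap uniformly by working throughout modulo $n$ and defining block-starts solely by the color-change condition on the pair $(y-1,y)$, which absorbs the $n\to 1$ seam automatically; for $b=1$ the induced map on non-singletons is then a transposition, still a single cycle, consistent with $g=0$. The only remaining point, that $\rho$ sends non-singletons to non-singletons, is free, since a permutation fixes its fixed points setwise and hence permutes the complementary set.
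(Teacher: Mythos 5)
Your proof is correct, and it reaches the key fact $f'=1$ by a genuinely different route from the paper's. The paper first performs a Cori--Hetyei-style reduction: it deletes the $s_1+s_2$ points responsible for the singleton faces, obtaining a partition of $\{1,\ldots,n'\}$ into two increasing cycles of equal length $p'$ for which $\sigma'\circ\tau'^{-1}$ has no fixed points, and then shows by a short contradiction argument (``suppose $i_2<i_1-1$\dots'') that necessarily $\sigma'^2=\tau'$, whence $\sigma'\circ\tau'^{-1}=\sigma'^{-1}$ is a single cycle. You instead stay on the original partition, identify the non-fixed points of $\sigma\circ\tau^{-1}$ with the starts of the maximal monochromatic runs, and compute the induced permutation on these $2b$ starts explicitly ($\mathrm{start}\,\beta_j\mapsto\mathrm{start}\,\gamma_{j-1}$ and $\mathrm{start}\,\gamma_j\mapsto\mathrm{start}\,\beta_j$), exhibiting the unique long cycle. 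The two arguments encode the same underlying structure---your blocks are exactly the points of the paper's reduced partition, and your two displacement formulas are the unreduced form of the identity $\sigma'\circ\tau'^{-1}=\sigma'^{-1}$---but yours is self-contained (no need to set up the reduced partition and check that the reduction preserves the genus while removing exactly the singleton faces) and constructive (it names the long cycle rather than ruling out a second one), at the price of the cyclic-predecessor bookkeeping you flag. That bookkeeping does go through: for a block start $y=m_i$ of $C_1$, the element $\tau^{-1}(y)=m_{i-1}$ is the largest element of $C_1$ cyclically below $y$, the whole cyclic interval between $m_{i-1}$ and $y$ lies in $C_2$ and is nonempty precisely because $y$ is a block start, so $m_{i-1}$ ends the preceding $C_1$-block and $m_{i-1}+1$ starts the intervening $C_2$-block; your treatment of the wrap-around and of the degenerate case $b=1$ is likewise sound.
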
  
\begin{proof} In the spirit of the reduction of diagrams  to irreducible ones of the same genus, in Cori--Hetyei procedure~\cite{CoriH13, CoriH17, Z23}, we now 
define a new partition of a new set: the $s_1$  points $m_{i+1}=m_i+1$ and $s_2$ points   $p_{j+1}=p_j+1$ are deleted, $n$ reduced to
$n'=n-s_1-s_2$, $p$ to $p'=p-s_1 =n-p-s_2$, all entries relabeled from 1 to $n'$, but the genus is unchanged $2g'=n'-f'-1=2g$. We are now dealing with a partition of $\{1,\ldots, n'\}$ in 
two parts of equal length $p'$, with associated permutations $\sigma'$, cyclic on $\{1,\ldots, n'\}$, and $\tau'$ of type $[p',p']$ with two increasing cycles
 $C'_1$ and $C'_2$.
By construction, $\sigma'\circ \tau'^{-1}$ has no singleton. We claim that such a  $\sigma'\circ \tau'^{-1}$ has only one cycle, hence is a cyclic permutation.

Since we assume that $\sigma'\circ \tau'^{-1}$ 
has  no singleton, this means that for any $i_1\in C'_1$, say,  $i_2:=\sigma'(\tau'^{-1}(i_1))= \tau'^{-1}(i_1)+1 $ satisfies 
$\tau'^{-1}(i_1) < i_2 < i_1$ since each cycle of $\tau'$ is increasing and no singleton appears. 
 But again by the monotonicity of each of the cycles of $\tau'$,  there cannot be between $\tau'^{-1}(i_1)$   and $ i_1$
 an element in the same cycle $C'_1$. This means that $i_2$ belongs to the other cycle $C'_2$.
 
 Suppose that $i_2 <i_1-1$ and consider the point $i_2+1=\sigma'(i_2)$.
   Being between  $\tau^{-1}(i_1)$  and $ i_1$  it cannot belong to the same cycle  $C'_1$ as $i_1$.
 It cannot either belong to cycle $C'_2$ because it would then equal $\tau'(i_2)$ and thus make a singleton of  $\sigma'\circ \tau'^{-1}$:
 $\sigma'(\tau'^{-1}(i_2+1))= \sigma'(i_2)=i_2+1$. Thus the hypothesis $i_2 <i_1-1$
 must be rejected and we conclude that $i_2= \sigma'(\tau'^{-1}(i_1)) =i_1-1=\sigma'^{-1}(i_1)$. Thus $\sigma'^2(\tau'^{-1}(i_1))=i_1$ and
 since this applies to arbitrary $i_1$, we conclude that 
 $$ \sigma'^2= \tau'$$
 \ie that $\tau'$ is the product of two cyclic permutations acting on even, resp., odd integers between 1 and $n'$; 
 or, in other words, that   $\sigma'\circ \tau'^{-1}$ is the cyclic permutation  $\sigma'^{-1}$.  
\end{proof}

 The property $f^\prime =1$, established in the above lemma, may be rephrased as follows:\\
 {\sl Let $\tau$ be a permutation of ${\mathcal S}_n$ defined as the product of two disjoint increasing cycles, 
 and $\sigma$ be the circular permutation $(1, 2, . . . , n)$, then the cyclic decomposition of the product $\sigma\circ\tau^{-1}$ contains 
 only one non-trivial cycle.}
 
We could not find this property mentioned in the literature.
 
\begin{proposition}{\label{propostwopartslabel}For general $n\ne 2p,  g$, and $p\ge 2$, (otherwise, if $n=2p$, multiply by $\inv{2}$)
\begin{equation} \label{conjpn-p}C_{n, [p,n-p]}^{(g)}=\frac{n}{g+1} \binom{p-1}{ g}  \binom{n-p-1}{ g}\, .\end{equation}}\end{proposition}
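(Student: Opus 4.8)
The plan is to count partitions of type $[p,n-p]$ by summing over the genus parameter $s_1$ identified in Lemma~\ref{lemmalabelfprime}, which already establishes $f'=1$ and hence $g=p-s_1-1$. This reduces the problem to a purely combinatorial count: for fixed $p$ and $g$, I must count the number of ways to place two increasing cycles $C_1$ (of length $p$) and $C_2$ (of length $n-p$) on $\{1,\ldots,n\}$ so that exactly $s_1=p-1-g$ of the gaps between consecutive elements of $C_1$ (cyclically) are empty. Recall from the setup that $r_i$ counts the integers strictly between $m_i$ and $m_{i+1}$, with $\sum_i r_i=n-p$; the quantity $s_1$ is precisely the number of indices $i$ with $r_i=0$.

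First I would set up the count in terms of the composition $(r_1,\ldots,r_p)$ of $n-p$ into $p$ nonnegative parts. Since a partition of type $[p,n-p]$ is completely determined by the cyclic placement of $C_1$ (the second cycle $C_2$ fills the remaining points), the enumeration amounts to counting such cyclic arrangements with exactly $p-1-g$ of the $r_i$ equal to zero, i.e.\ exactly $g+1$ of them strictly positive. Choosing which $g+1$ of the $p$ gaps are nonempty can be done in $\binom{p}{g+1}$ ways, but because the arrangement is placed on a circle of $n$ labeled points there is an overcounting/rooting factor to handle. The clean way is to fix the position of $m_1$ (say, break the cyclic symmetry by rooting at a distinguished point, contributing the factor $n$), then distribute the $n-p$ points of $C_2$ into exactly $g+1$ nonempty blocks among the $p$ cyclic gaps.

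The heart of the computation is then the identity
\begin{equation*}
\#\{(r_1,\ldots,r_p)\ge 0 : \textstyle\sum_i r_i = n-p,\ \#\{i: r_i>0\}=g+1\} = \binom{p}{g+1}\binom{n-p-1}{g},
\end{equation*}
where the first binomial selects the nonzero gaps and the second is a standard stars-and-bars count of compositions of $n-p$ into exactly $g+1$ positive parts, namely $\binom{n-p-1}{g}$. Multiplying by the rooting factor $n$ and then correcting for the cyclic redundancy should yield $\frac{n}{g+1}\binom{p-1}{g}\binom{n-p-1}{g}$; the appearance of $\binom{p-1}{g}$ rather than $\binom{p}{g+1}$, together with the $\tfrac{1}{g+1}$, is exactly the Cycle Lemma / rooting normalization, reflecting that rooting a cyclic structure of $p$ gaps with $g+1$ marked nonempty ones introduces the factor $\frac{g+1}{p}$ relating $\binom{p}{g+1}$ to $\frac{p}{g+1}\binom{p-1}{g}$. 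I would verify this bookkeeping carefully, since the passage from linear compositions to genuinely cyclic arrangements (avoiding both over- and undercounting, and inserting the correct factor of $n$) is the subtle part.

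The main obstacle I anticipate is precisely this cyclic-to-linear reconciliation: naively one double-counts arrangements that are rotationally equivalent, yet the points on the circle carry genuine labels $1,\ldots,n$, so the symmetry is broken and the factor $n$ must be inserted in a way consistent with how the gap pattern $(r_i)$ determines the labeled diagram. I would resolve this by rooting at the canonical choice $m_1$ = smallest element, which makes the correspondence between labeled diagrams and (choice of nonempty gaps, composition of the filling) bijective, and then track how the Cycle Lemma converts $\binom{p}{g+1}$ into $\frac{p}{g+1}\binom{p-1}{g}$. As a final consistency check, I would confirm that summing \eqref{conjpn-p} over $p$ recovers $S_{n,2}^{(g)}=\binom{n}{2g+2}$ as asserted in \eqref{Sfork=2}, which both validates the formula and pins down the normalization, including the extra factor $\tfrac12$ in the symmetric case $n=2p$ where $C_1$ and $C_2$ are interchangeable.
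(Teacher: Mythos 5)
Your argument is correct and reaches the result by a genuinely different route from the paper's. Both proofs rest on the same reduction (Lemma \ref{lemmalabelfprime} together with the fact that a two-part partition is determined by the $p$-element subset carrying $C_1$, with $g=p-s_1-1$ read off from the adjacency count), so the problem becomes: count $p$-subsets of the labeled $n$-cycle whose elements form exactly $g+1$ maximal blocks, i.e., with exactly $g+1$ positive cyclic gaps $r_i$. The paper finishes by a transfer-matrix computation: it encodes the count as the coefficient of $t^{2p}u^{p-1-g}$ in $\mathrm{tr}\,M^n$, derives the linear recurrence (\ref{recurtn}) in $n$, and verifies that the closed form $D_{n,p,g}$ obeys the same recurrence with matching initial data. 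You instead count directly: $\binom{p}{g+1}\binom{n-p-1}{g}$ gap vectors, with the cyclic normalization supplying $\frac{n}{p}\binom{p}{g+1}\binom{n-p-1}{g}=\frac{n}{g+1}\binom{p-1}{g}\binom{n-p-1}{g}$. Your bookkeeping does close up, but the cleanest way to nail it is to count pairs (subset, distinguished element of the subset): there are $n\cdot\binom{p}{g+1}\binom{n-p-1}{g}$ such pairs and each subset is counted $p$ times --- your phrase about rooting at ``$m_1=$ smallest element'' sits slightly awkwardly with the factor $n$, so I would phrase the division as overcounting by all $p$ elements rather than appeal to a canonical root (equivalently, root at a distinguished block to land on $\frac{n}{g+1}\binom{p-1}{g}\binom{n-p-1}{g}$ directly). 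What your approach buys is a transparent, bijective explanation of the shape of the formula (two composition counts and a cycle-lemma factor) with no guess-and-verify step; what the paper's buys is a mechanical derivation that also yields the recurrence as a by-product. Your closing consistency check (summing over $p$ to recover $\binom{n}{2g+2}$ via Vandermonde) is exactly the one the paper records in the Remark following the Proposition.
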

Note that this result is symmetric under $p\leftrightarrow n-p$ as it should, and that the bound $g\le \min(p-1,n-p-1)$ is manifest. 
An alternative expression is
\begin{equation} \label{conjpn-p-alt}C_{n, [p,n-p]}^{(g)}=\frac{n}{p} \binom {p}{ p-1-g}  \binom{n-p-1}{ g}.\end{equation}
\begin{proof} 
  Following the lines of Lemma \ref{lemmalabelfprime}, a {partition} is completely determined by the choice of the $p$ points $m_i$ on the circle, subject to the condition that there are $s_1$ pairs of adjacent points. Thus
 \begin{equation} C_{n, [p,n-p]}^{(g)}=\#\{ m_1,\ldots, m_p \in \{1,n\}\,|\, \#\{i|m_{i+1}=m_i+1  \}=s_1 \} ,\end{equation}
where $g=p-s_1-1$.
 This number may be easily computed by a transfer matrix technique\footnote{We  are quite grateful to Philippe Di Francesco for this suggestion.}. Suppose the black and blue points of the circle 
are representing two states of a periodic system on a circle, and assign a weight $1, t, t^2u$ to a
transition (\ie an arc on the circle) between  respectively black-black, black-blue and blue-blue points.
The matrix $$ M=\begin{pmatrix}1& t\\ t & t^2 u\end{pmatrix}$$ describes the possible transitions between these states.
 The number $C_{n, [p,n-p]}^{(g)}$ is then the coefficient of $t^{2p}  u^{p-1-g}$ in 
\begin{equation} t_n={\rm tr}\, M^n. \end{equation} 
 Let $z:= t^2$. By virtue of the characteristic equation of $M$, the numbers $t_n$ satisfy the recurrence relations
 \begin{equation}\label{recurtn} t_n = t_1 t_{n-1} +z(1-u) t_{n-2}\end{equation}
 with $t_0=2 $ and $  t_1=1 + z u$, {whence $t_2=1+2 z +z^2 u^2$,
 $t_3=1 + 3 z + 3 z^2 u + z^3 u^3$}. It follows from (\ref{recurtn}) that 
\begin{equation}  [t_n]_{z^p u^{p-1-g}}= [t_{n-1}]_{z^p u^{p-1-g}} + [t_{n-1}]_{z^{p-1} u^{p-2-g}}+[t_{n-2}]_{z^{p-1} u^{p-1-g}} -  [t_{n-2}]_{z^{p-1} u^{p-2-g}},  \end{equation}
 from which all $ [t_n]_{z^p u^{p-1-g}}$ may be reconstructed for $p\le n-1, 0\le g\le p-1$. 
 
 Let $D_{n,p,g}:= \frac{n}{g+1} \binom{p-1}{ g}  \binom{n-p-1}{ g}$, one may check that the $D$'s  
satisfy the same relation, namely 
\begin{equation}   D_{n,p,g}= D_{n-1,p,g}+ D_{n-1,p-1,g}+ D_{n-2,p-1,g-1} - D_{n-2,p-1,g},\end{equation}
with
\scalebox{0.84} {$D_{2,1,0}=[t_2]_{z u^0}=2,\  D_{2,1,1}=[t_2]_{z u^{-1}}=0,\ D_{3,2,0}=[t_3]_{z^2 u}=3,\ D_{3,2,1}=[t_3]_{z^2 u^0}=0,\ D_{2,1,0}=[t_2]_{z u^0}=0\,$.}
Thus $ C_{n, [p,n-p]}^{(g)}$ is equal to $ D_{n,p,g}$, 
which completes the proof of Proposition \ref{propostwopartslabel}. \end{proof} 

\begin{remark}
The formula (\ref{conjpn-p}), originally proposed for $p\ge 2$, extends trivially to all $p$: from (\ref{with/osingle}), we
 learn that $C^{(g)}_{n,[1,n-1]}= n  C_{n-1,[n-1]}^{(g)}= n \delta_{g\, 0}$, in accordance with the rhs of  (\ref{conjpn-p})
 evaluated at $g=0$. \end{remark} 
\begin{remark}
One may check that the expression of $C_{n,[p,n-p]}^{(g)}$ 
 is consistent with the Fa\`a di Bruno coefficient (\ref{faadiBruno}):  
$\sum_{g=0}^{p-1}  C_{n,[p,n-p]}^{(g)} = \binom{n}{ p}$. Once again, this is trivially true for $p=1$ and, 
for $2\le p < n-p$, this is an easy consequence of the celebrated Vandermonde identity, namely:
\begin{equation}\label{VdMidentity}\sum_{k=0}^m \binom{n}{ k} \binom{\ell}{ m-k} = \binom{\ell+n}{ m}. \end{equation} \vskip4pt
Here, using  (\ref{conjpn-p-alt})  together with (\ref{VdMidentity}), one finds:
\begin{equation}  \frac{n}{p}\sum_{g=0}^{p-1}\binom{p}{ p-1-g} \binom{n-p-1}{ g}=
\frac{n}{p} \binom{n-1}{ p-1} =\binom{n}{ p}. \end{equation}
\end{remark}
\begin{remark}
Generating function of the  $C_{[p,n-p]}^{(g)}$. One may build a G.F. for the $ C_{[p,n-p]}^{(g)}$ 
adapted to their symmetry under $p\leftrightarrow n-p$: 
\begin{equation}  {Z}^{(g)}(x,v):= \frac{1}{2} \sum_{n=0}^\infty \sum_{p=1}^{n-1} C_{[p,n-p]}^{(g)}  x^n v^{2p-n}\end{equation}
One finds 
\begin{equation} {Z}^{(g)}(x,v) = \mathcal{Z}^{(g)}(x,1/v) =\frac{(2-x(v+1/v) )x^{2g+2}}{2(1-x v )^{g+2}(1-x/v)^{g+2}}.\end{equation}
In particular, for $v=1$, one recovers the G.F. of the $S^{(g)}_{n,2}$ already encountered in the 
Remark at the end of Section~\ref{partwosingl}, $\sum_n S^{(g)}_{n,2} x^n=\frac{x^{2g+2}}{(1-x)^{2g+3}}$.
 \end{remark} 
 \begin{remark}
  Proposition \ref{propostwopartslabel} can be checked at low $n$,  for instance $n=5$,  by determining explicitly the partitions contributing 
 to $C_{n, [n-5,5]}^{(g)}$. Their number indeed starts as in Table \ref{tabletwoparts5}, in agreement with Eqs.~\eqref{conjpn-p}--\eqref{conjpn-p-alt}.
 \end{remark} 

\begin{table}[h]
\centering
\begin{tabular}{l | c| c|  c| c|  c| c|  c| c|   }
\qquad $g$& 0 & 1 &2 &3& 4\\ 
\hline
$n=6$ & 6&&&&\\
$n=7$ & 7&14&&&\\
$n=8$ & 8&32&16&&\\
$n=9$ & 9&54&54&9&\\
$n=10$ & 5&40&60&20&1\\
$n=11$ &11&110&220&110&11\\
$n=12$ &12&144&360&240&36\\
$n=13$ &13&182&546&  455&91\\
$n=14$ &  14&224&784&784&196\\
$n=15$ & 15&270&1080&1260&378\\
\end{tabular}
\caption{Number of partitions contributing to $C_{n, [n-5,5]}^{(g)}$.}
\label{tabletwoparts5}
\end{table}

\subsubsection{Case $[\alpha] = [p^2]$}
So $n=2p$ ($2$ parts of length $p$), and $g=\frac{2p-1-f}{2} \le p-1$.
This is of course a particular case of the type  $[\alpha] = [p,n-p]$ considered in this section.
 Here, the Fa\`a di Bruno coefficients are the numbers ``of ways to put $p$ identical objects into $g+1$ of altogether $p$ distinguishable boxes''.
See OEIS sequence \seqnum{A103371}.  Notice that the last writing below is indeed consistent with (\ref{conjpn-p}). One has 
\begin{equation} C_{2p, [p^2]}^{(g)}= \binom{p-1}{ p-g-1} \binom{p}{ p-g-1}=\binom{p-1}{ g} \binom{p}{ g+1}=\frac{p}{g+1} \binom{p-1}{ g}^2\end{equation}
For small values of $p$ and $g$ these coefficients are gathered with those of the cases $[\alpha]=[p^k]$ studied in Section  \ref{tablepwedgek}.

 The general formula being given above we only notice that the  $g=0$ sequence is just $p$ and that the $g=1$ sequence defines the pentagonal pyramidal numbers that we shall meet again in Section  \ref{tablepwedgek} 
---see our comments there. Notice also that the sum over $g$   
is $\binom{2p-1}{ p}=\{1,3,10,35,126, 462, \ldots\}$,(see {OEIS sequence} \seqnum{A001700}), and that
the penultimate term in each row, $\{2,6,12,20,30,\ldots\}$,  is  equal to $p(p-1)${.  Indeed}  for $g=p-2$,   $C_{2p, [p^2]}^{(p-2)}= \binom{p-1}{ p-2} \binom{p}{ p-1}=p(p-1)$.

\section{Genus-dependent Fa\`a di Bruno coefficients $C_{n, [\alpha]}^{(g)}$. Part II. A compilation of partial results}
\label{sect6}

\subsection{About types $[\alpha] = [p^k]$ for given $p=2,3,4,\ldots$ as a function of $k$} 
 
The results for $C_{n, [\alpha]}^{(g)}$ when $[\alpha]={[p^k]}$ are gathered in Table \ref{tablepwedgek}.  
For given $p$ and $k$ the values are listed vertically (downward) 
 according to the genus $g$, for $g=0,1,2,\ldots$
Here $n=k p$, (\ie $k$ parts of length $p$);  we have $g=(k(p-1)+1-f)/2$, therefore  $g \leq k(p-1)/2$.
These values have been obtained by an explicit determination of the genus for computer generated set partitions, or obtained from general theorems. 

\begin{table}[ht]
\begin{minipage}{\textwidth}
\begin{tabular}{c}
\resizebox{0.97\textwidth}{!}{
\begin{tabular}{C|C|C|C|C}
 & \text{$k$=1} & \text{$k$=2} & \text{$k$=3} & \text{$k$=4} \\
\hline
 \text{$p$=2} & 1 & 2,1 & 5,10 & 14,70,21 \\
 \text{$p$=3} & 1 & 3,6,1 & 12,102,144,22 & 55,1212,6046,7163,924 \\
 \text{$p$=4} & 1 & 4,18,12,1 & 22,432,2007,2604,710 & 140,7236,108090,592824,1180364,688270,50701 \\
 \text{$p$=5} & 1 & 5,40,60,20,1 & 35,1240,12060,41820,51565,18540,866 & 285,26800,809960,\ldots  \\
 \text{$p$=6} & 1 & 6,75,200,150,30,1 & 51,2850,47475,316700,905415,1076238,462375,47752 & 506,75450,3837575,\ldots  \\
 \text{$p$=7} & 1 & 7,126,525,700,315,42,1 & 70,5670,144270,\ldots  & 819,177660,13656006,\ldots 
\end{tabular}}
\\ 
{}
\\
\resizebox{0.97\textwidth}{!}{
\begin{tabular}{C|C|C|C}
 & \text{$k$=5} & \text{$k$=6} & \text{$k$=7} \\
\hline
 \text{$p$=2} & 42,420,483 & 132,2310,6468,1485 & 429,12012,66066,56628 \\
 \text{$p$=3} & 273,12330,149674,576660,593303,69160 & 1428,114888,2771028,\ldots  & 7752,1011486,42679084,\ldots  \\
 \text{$p$=4} & 969, 103680, 3588318,\ldots  & 7084,1359882,90800208,\ldots  
 & 53820, 16846704, 1929948363, \ldots  \\
 \text{$p$=5} & 2530,495200,34034480,\ldots  & 23751,8373000,1097464620,\ldots  & 231880,133685440,29830376800,\ldots  \\
 \text{$p$=6} & 5481,1707000,195525750 \ldots  & 62832,35331000,7670848500,\ldots  & 749398,690413850,254134018600,\ldots  \\
 \text{$p$=7} & 10472,4755870,818352528,\ldots  & 141778,116450460,37838531178,\ldots  & 1997688,2691733464,1479039054696,\ldots  
\end{tabular}}
\end{tabular}
\end{minipage}
\caption{Table of coefficients $C_{n, [\alpha]}^{(g)}$ for $n=k\, p$,\;  $[\alpha]={[p^k]}$.}
\label{tablepwedgek}
\end{table}

In this section we describe some generic features of the sequences that are obtained for increasing values of $p$, for various choices of $[\alpha] = [p^k]$.
The situation where $[\alpha] = [p^2]$, which is a particular case of the type  $[\alpha] = [p,n-p]$ considered in Section \ref{2part-partitions}---a``solved case''---(see (\ref{conjpn-p}) or  (\ref{conjpn-p-alt})), was already discussed at the end of   \ref{2part-partitions}.

\bigskip 
We recall from (\ref{faadiBruno}) that: 
\begin{itemize}
\renewcommand\labelitemi{--}
\item the $k$-th row's sum (over $g$) in the Table of $[p^k]$ is $(pk)!/(k!(p!)^k)$.  See  OEIS sequences \seqnum{A025035},
 \seqnum{A025036}, \seqnum{A025037}, \seqnum{A025038}, \seqnum{A025039},  for $p=3,\ldots,7$. 
\item and from (\ref{genus0}) that
 $C_{p.k, [p^k]}^{(0)} =\inv{pk+1} \binom{pk+1}{ k}={\inv{ (p-1)k+1}} \binom{pk}{ k}$.
 \end{itemize}

\subsubsection{Genus $g=1$: Observations and conjectures}

For $k=2$, it follows from Proposition~\ref{oldprop1} that
\begin{equation} C_{2p, [p^{2}]}^{(1)}=\frac{p(p-1)^2}{ 2}\end{equation}
which are the ``pyramidal pentagonal numbers'', $\{0, 1, 6, 18, 40, 75, 126, 196, 288, 405, \ldots\}$.  See OEIS sequences \seqnum{A002411}.
Furthermore, we observe that these pyramidal pentagonal numbers factorize the coefficients $C_{p k, [p^{k}]}^{(1)}$
\begin{equation} C_{p k, [p^{k}]}^{(1)} \? C_{2p, [p^{2}]}^{(1)} \phi(p, k)\end{equation}
\begin{equation}  \phi(p,k)\ =\ \begin{matrix}  0 & 1 & 10 & 70 &420 &\cdots \\
 				    0 & 1 & 17 & 202 & 2055& \cdots \\
				    0 & 1 & 24 & 402 &5760 &\cdots \\
				    0 & 1 & 31 & 670 &12380 &\cdots \\
				    0 & 1 & 38 & 1006 &22760& \cdots \\
				    0 & 1 & 45 & 1410 & 37745 & \cdots \\
				    \vdots &&&  
				    \end{matrix}   \qquad 2\le  p\le 7\end{equation}
in which the third column is an arithmetic series $7p-4$, the fourth $ 34 p^2 -38 +10$, etc.

In other words, the above tables are  compatible with the following
expressions\footnote{The expression for $k=3$ is a particular case
of a formula that will be discussed in Section~\ref{threeparts}.}: 
\begin{align} \nonumber
C_{3p,[p^3]}^{(1)}&=\frac{p(p-1)^2}{ 2}(7p-4),  \\ \nonumber
C_{4p,[p^4]}^{(1)}&{\?}\frac{p(p-1)^2}{ 2} (34p^2-38p +10), \\ 
\label{3parts}
C_{5p,[p^5]}^{(1)}&{\?}\frac{p(p-1)^2}{ 2} \, \frac{5}{6}(169 p^3 -279 p^2 +146 p -24),  \\ \nonumber
C_{6p,[p^6]}^{(1)}&{\?}\frac{p(p-1)^2}{ 2} \, (533 p^4 -1160 p^3 +\frac{1813}{2} p^2 -\frac{599}{2} p+35)\,  \\ \nonumber
C_{7p,[p^7]}^{(1)}&{\?}\frac{p(p-1)^2}{ 2}  \frac{7}{120} (32621 p^5 - 87970 p^4 + 91335 p^3 - 45410 p^2 + 10744 p-960  ).
\end{align}  
The constant terms in the polynomial $\phi$ appear to be (up to a sign $(-1)^k$) the ``tetrahedral (or triangular pyramidal) numbers'': $ k(k^2-1)/6$,  OEIS sequence \seqnum{A000292}.

\subsubsection{Genus $g=2$: Observations and conjectures}
\begin{align} \nonumber
C_{3p,[p^3]}^{(2)} &\? \inv{8}p ( p-1)^2 (p-2)  (27 - 55 p + 26 p^2), \\
C_{4p,[p^4]}^{(2)} &\? \inv{6}p {(p-1)^2}  (287 -1248 p +1908 p^2 - 1218 p^3 + 274 p^4),\\ \nonumber
C_{5p,[p^5]}^{(2)} &\? \inv{144} p ( p-1)^2 (-30576 + 194318 p - 467213 p^2 + 532986 p^3 - 288895 p^4 + 59500 p^5).
\end{align}
\noindent In each case, the conjecture has been tested on at least two more values than those used in the extrapolation.
\bigskip

 \subsection{Cases $[\alpha] =  [n-p-q, p,q]$.
 Partition of $n$ into $k=3$ parts}
 \label{threeparts}
 All the data that have been collected in that case,  when the genus is 0 or 1, 
  (see the Tables in the Appendix),  are given below.
\begin{align}
\label{threeparts0} 
C^{(0)}_{n,[n-p-q,p,q]} &=  n(n-1)\\
 \label{threeparts1} 
C^{(1)}_{n,[n-p-q,p,q]} &{=}  \frac{n}{2}(-5 (n - 1)^2 + 3 (p^2 + q^2 + r^2 - 1) +  6 p q r + (r+p) ( r + q) (p + q) ),
\end{align}
where in the last expression, the symmetry in the exchange of $p$, $q$ and $r:=n-p-q$ is manifest.
 These  expressions have to be multiplied by  $ \inv{2}$ if two of the three integers $p,q,n-p-q$ are equal, 
and by $\inv{6}$ if  $p=q=n-p-q$.

Equation (\ref{threeparts0}) for genus 0 is an immediate consequence of Kreweras' formula, (\ref{genus0}).
Equation (\ref{threeparts1}) was presented as a conjecture in a   first version of 
 this paper  but  it has been subsequently proved by Hock. See our comments in the acknowledgments section.
 
\section{Tables}

The following tables provide the
coefficients $C_{n,[\alpha]}^{(g)}$ for  $2\le n \le 15$ and $\alpha_1=0$.
Partitions without singletons are ordered by
increasing number of parts $|\alpha|$ 
and then by lexicographic order on $[\alpha]$. 
\begin{table}[H]
\begin{minipage}{0.45\textwidth}
\scalebox{0.6}{
\begin{tabular}{C|C|CCCCCCCC} 
& \hfill  g & &0&1&2&3&4&5\\ 
& [\alpha]&     &&&&&&\\
\hline
{n=2} & [2] &&1& 0& 0& 0& 0& 0\\
\hline
{n=3}& [3] &&1& 0& 0& 0& 0& 0\\
\hline
{n=4}&[4] &&1& 0& 0& 0& 0& 0\\[3pt]
& [2^2] &&2& 1& 0& 0& 0& 0\\
\hline
{n=5}&[5] &&1& 0& 0& 0& 0& 0\\[3pt]
& [2, 3] &&5& 5& 0& 0& 0& 0\\
\hline
{n=6}&[6] &&1& 0& 0& 0& 0& 0\\[3pt]
& [2,4] &&6& 9& 0& 0& 0& 0\\
& [3^2] &&3& 6& 1& 0& 0& 0\\[3pt]
& [2^3] &&5& 10& 0& 0& 0& 0\\
\hline
{n=7}&[7] &&1& 0& 0& 0& 0& 0\\[3pt]
& [2,5] &&7& 14& 0& 0& 0& 0\\
& [3, 4] &&7& 21& 7& 0& 0& 0\\[3pt]
& [2^2,3] &&21& 70& 14& 0& 0& 0\\
\hline
{n=8}&[8] &&1& 0& 0& 0& 0& 0\\[3pt]
& [2,6] &&8& 20& 0& 0& 0& 0\\
& [3,5] &&8& 32& 16& 0& 0& 0\\
& [4^2] &&4& 18& 12& 1& 0& 0\\[3pt]
& [2^ 2,4] &&28& 128& 54& 0& 0& 0\\
& [2,3^2] &&28& 152& 100& 0& 0& 0\\[3pt]
& [2^4] &&14& 70& 21& 0& 0& 0\\
\hline
{n=9}&[9] &&1& 0& 0& 0& 0& 0\\[3pt]
& [2,7] &&9& 27& 0& 0& 0& 0\\
& [3,6] &&9& 45& 30& 0& 0& 0\\
& [4,5] &&9& 54& 54& 9& 0& 0\\[3pt]
& [2^2,5] &&36& 207& 135& 0& 0& 0\\
& [2, 3, 4] &&72& 531& 603& 54& 0& 0\\
& [3^3] &&12& 102& 144& 22& 0& 0\\[3pt]
& [3, 2^3] &&84& 630& 546& 0& 0& 0\\
\end{tabular}}
\end{minipage}
\begin{minipage}{0.45\textwidth}
\scalebox{0.6}{  
  \begin{tabular}{C|C|CCCCCCCC} 
&  \hfill  g &0&1&2&3&4 & 5 \\ 
& [\alpha]&     &&&&&&\\ 
\hline
{n=10}&[10] &1& 0& 0& 0& 0& 0\\[3pt]
& [2,8] &10& 35& 0& 0& 0& 0\\
& [3,7] &10& 60& 50& 0& 0& 0\\
& [4,6] &10& 75& 100& 25& 0& 0\\
& [5^2] &5& 40& 60& 20& 1& 0\\[3pt]

& [2^2,6] &45& 310& 275& 0& 0& 0\\ 
& [2, 3, 5] &90& 830& 1340& 260& 0& 0\\ 
& [2,4^2] &45& 450& 840& 240& 0& 0\\ 
& [3^2,4] &45& 510& 1115& 430& 0& 0\\[3pt]

& [2^3,4] &120& 1165& 1685& 180& 0& 0\\ 
& [2^2, 3^ 2] &180& 1985& 3565& 570& 0& 0\\[3pt] 

& [2^5] &42& 420& 483& 0& 0& 0\\
\hline
{n=11}&[11] &1&0&0&0&0 &0\\[3pt]
& [2,9] & 11 & 44 & 0 & 0 & 0 & 0 \\
 & [3,8]&11 & 77 & 77 & 0 & 0 & 0 \\
& [4,7] & 11 & 99 & 165 & 55 & 0 & 0 \\
&[5,6] & 11 & 110 & 220 & 110 & 11 & 0 \\[3pt]

&[2^2,7]  &55 & 440 & 495 & 0 & 0 & 0 \\
& [2,3,6] & 110 & 1210 & 2530 & 770 & 0 & 0 \\
&[2,4,5] & 110 & 1375 & 3564 & 1793 & 88 & 0 \\
& [3^2,5] & 55 & 770 & 2277 & 1452 & 66 & 0 \\
&[3,4^2]& 55 & 825 & 2684 & 2035 & 176 & 0 \\[3pt]

&[2^3,5] & 165 & 1936 & 3905 & 924 & 0 & 0 \\
&[2^2,3,4]& 495 & 7007 & 19085 & 8063 & 0 & 0 \\
&[2,3^3] &165 & 2607 & 8195 & 4433 & 0 & 0 \\[3pt]

& [3,2^4] & 330 & 4620 & 10395 & 1980 & 0 & 0 \\
\end{tabular}}
\end{minipage}
\end{table}

\newpage

\begin{table}[H]
\begin{minipage}{0.45\textwidth}
\scalebox{0.6}{
\begin{tabular}{C|C|CCCCCCCC}
& \hfill  g  &0&1&2&3&4&5\\
& [\alpha]&     &&&&&&\\
\hline
{n=12}&[12] &1&0&0&0&0 &0\\[3pt]
&[2,10]  &12 & 54 & 0 & 0 & 0 & 0 \\
&[3,9] &12 & 96 & 112 & 0 & 0 & 0 \\
&[4,8] &12 & 126 & 252 & 105 & 0 & 0 \\
&[5,7]& 12 & 144 & 360 & 240 & 36 & 0 \\
 &[6^2]&6 & 75 & 200 & 150 & 30 & 1 \\[3pt]
 
&[2^2,8]&66 & 600 & 819 & 0 & 0 & 0 \\
&[2,3,7]& 132 & 1680 & 4308 & 1800 & 0 & 0 \\
&[2,4,6]& 132 & 1968 & 6510 & 4740 & 510 & 0 \\
 &[2,5^2]& 66 & 1032 & 3672 & 3072 & 474 & 0 \\
 &[3^2,6]& 66 & 1092 & 4062 & 3640 & 380 & 0 \\
  &[3,4,5]& 132 & 2436 & 10500 & 12084 & 2568 & 0 \\
  &[4^3]& 22 & 432 & 2007 & 2604 & 710 & 0 \\[3pt]

 &[2^3,6]& 220 & 3000 & 7730 & 2910 & 0 & 0 \\
  &[2^2,3,5]& 660 & 11232 & 40716 & 28968 & 1584 & 0 \\
  &[2^2, 4^2]& 330 & 5988 & 23877 & 20097 & 1683 & 0 \\
  &[2,3^2,4]& 660 & 13218 & 59076 & 59442 & 6204 & 0 \\
  &[3^4]& 55 & 1212 & 6046 & 7163 & 924 & 0 \\[3pt]

&[2^4,4]& 495 & 8616 & 28590 & 14274 & 0 & 0 \\
&[2^3,3^2]& 990 & 19104 & 73050 & 45456 & 0 & 0 \\[3pt]

&[2^6]& 132 & 2310 & 6468 & 1485 & 0 & 0 \\
\hline
{n=13}&[13] &1&0&0&0&0&0\\[3pt]
&[2,11]& 13 & 65 & 0 & 0 & 0 & 0 \\
&[3,10] &13 & 117 & 156 & 0 & 0 & 0 \\
&[4,9] &13 & 156 & 364 & 182 & 0 & 0 \\
&[5,8] &13 & 182 & 546 & 455 & 91 & 0 \\
 &[6,7]&13 & 195 & 650 & 650 & 195 & 13 \\[3pt]

 &[2^2,9]&78 & 793 & 1274 & 0 & 0 & 0 \\
 &[2,3,8] &156 & 2249 & 6825 & 3640 & 0 & 0 \\
 &[2,4,7] &156 & 2691 & 10803 & 10335 & 1755 & 0 \\
  &[2,5,6]&156 & 2912 & 13130 & 15470 & 4238 & 130 \\
 &[3^2,7]&78 & 1482 & 6630 & 7670 & 1300 & 0 \\
&[3,4,6] &156 & 3393 & 18161 & 28145 & 9945 & 260 \\
&[3,5^2] &78 & 1768 & 9984 & 16796 & 7046 & 364 \\
&[4^2,5]& 78 & 1872 & 11271 & 20904 & 10322 & 598 \\[3pt]

 &[2^3,7]& 286 & 4420 & 13819 & 7215 & 0 & 0 \\
 &[2^2,3,6]& 858 & 16900 & 76037 & 76765 & 9620 & 0 \\
&[2^2,4,5]& 858 & 18720 & 97539 & 125606 & 27547 & 0 \\
&[2,3^2,5]& 858 & 20488 & 117806 & 175916 & 45292 & 0 \\
&[2,3,4^2]& 858 & 21684 & 133887 & 222781 & 71240 & 0 \\
&[3^3,4]& 286 & 7878 & 53404 & 100997 & 37635 & 0 \\[3pt]

&[2^4,5]& 715 & 14612 & 63323 & 53053 & 3432 & 0 \\
&[2^3,3,4] &2860 & 68172 & 363610 & 419068 & 47190 & 0 \\
&[2^2,3^3] &1430 & 37336 & 221715 & 298649 & 41470 & 0 \\[3pt]
 
 &[3,2^5]&1287 & 30030 & 138138 & 100815 & 0 & 0 \\
 \end{tabular}}
 \end{minipage}
\quad
\begin{minipage}{0.50\textwidth}     
\scalebox{0.60}{  
 \begin{tabular}{C|C|CCCCCCCC} 
&  \hfill  g & &0&1&2&3&4&5 &6\\ 
& [\alpha]&     &&&&&&\\ 
\hline
{n=14}&[14] &&1&0&0&0&0&0&0\\[3pt]
&[2,12] &&14&77&0&0&0&0&0\\
&[3,11] &&14&140&210&0&0&0&0\\
&[4,10] &&14&189&504&294&0&0&0\\ 
&[5, 9] &&14&224&784&784&196&0&0\\ 
&[6, 8] &&14&245&980&1225&490&49&0\\ 
&[7^2] &&7&126&525&700&315&42&1\\[3pt] 

 &[2^2,10] &&91&1022&1890&0&0&0&0\\
&[2, 3, 9] &&182&2926&10248&6664&0&0&0\\
&[2, 4, 8] &&182&3556&16758&19894&4655&0&0\\
&[2,5,7] &&182&3934&21420&32620&13034&882&0\\
&[2,6^2] &&91&2030&11550&18900&8645&826&0\\
&[3^2,8] &&91&1946&10157&14406&3430&0&0\\
&[3,4,7] &&182&4536&28938&56434&28280&1750&0\\
&[3,5,6] &&182&4858&33824&75040&48174&6090&0\\
&[4^2,6] &&91&2562&18893&45654&33285&4620&0\\
&[4,5^2] &&91&2660&20496&52710&42679&7490&0\\[3pt]

&[2^3,8] &&364&6265&22981&15435&0&0&0\\ 
&[2^2,3,7] &&1092&24290&129948&170380&34650&0&0\\
&[2^2,4,6] &&1092&27587&176351&307020&114940&3640&0\\ 
&[ 2^2,5^2] &&546&14343&96726&182756&80094&3913&0\\ 
&[2, 3^2, 6] &&1092&29988&209510&415870&178920&5460&0\\ 
&[2,3,4,5] &&2184&65674&513450&1200738&696794&43680&0\\ 
&[2, 4^3] &&364&11529&95613&243180&162099&12740&0\\ 
&[ 3^3,5] &&364&11844&100660&262696&173348&11648&0\\ 
&[3^2, 4^2] &&546&18690&168273&475195&356951&31395&0\\[3pt]

&[2^4,6] &&1001&23240&123214&146020&21840&0&0\\
&[ 2^3,3,5] &&4004& 111608& 754614& 1286642& 365652&0&0\\
&[2^3, 4^2] &&2002&58912&427777&809823&278061&0&0\\ 
&[2^2,  3^2,4] &&6006&191968&1525454&3268552&1314320&0 &0\\
&[2,3^4] &&1001&34692&301070&725193&339444&0&0\\[3pt]

&[2^5,4] &&2002&56378&354263&473242&60060&0&0\\ 
&[ 2^4,3^2] &&5005&153832 & 1075165& 1663893& 255255 &0&0\\[3pt]

&[2^7] &&429&12012&66066&56628&0&0&0\\
 \end{tabular}}
 \end{minipage}
  \end{table}

\newpage
 
 \begin{table}[H]
\begin{center}  
\scalebox{0.65}{             
  \begin{tabular}{C|C|CCCCCCCC} 
&  \hfill  g & &0&1&2&3&4&5 &6\\ 
& [\alpha]&     &&&&&&\\ 
\hline
{n=15}& [15] && 1 & 0 & 0 & 0 & 0 & 0 & 0 \\[3pt]
&[2, 13] && 15 & 90 & 0 & 0 & 0 & 0 & 0  \\
&[3, 12] && 15 & 165 & 275 & 0 & 0  & 0  & 0 \\
&[4, 11] && 15 & 225 & 675 & 450 & 0 & 0 & 0  \\
&[5, 10] && 15 & 270 & 1080 & 1260 & 378  & 0  & 0 \\
&[6, 9] && 15 & 300 & 1400 & 2100 & 1050 & 140  & 0 \\
&[7, 8] && 15 & 315 & 1575 & 2625 & 1575 & 315 & 15 \\[3pt]

&[2^2, 11] && 105 & 1290 & 2700 & 0 & 0  & 0  & 0   \\
&[2, 3, 10] && 210 & 3720 & 14760 & 11340 & 0 & 0  & 0 \\
&[2, 4, 9] && 210 & 4575 & 24720 & 35070 & 10500  & 0  & 0  \\
&[2, 5, 8] && 210 & 5145 & 32760 & 61215 & 32340 & 3465  & 0  \\
&[2, 6, 7] && 210 & 5430 & 37200 & 78000 & 50550 & 8610 & 180 \\
&[3^2, 9] && 105 & 2490 & 14835 & 24920 & 7700   & 0  & 0 \\
&[3, 4, 8] && 210 & 5880 & 43470 & 102165 & 66675 & 6825  & 0 \\
&[3, 5, 7] && 210 & 6420 & 53010 & 146040 & 127290 & 26940 & 450 \\
&[3, 6^2] && 105 & 3300 & 28200 & 81500 & 77075 & 19380 & 650 \\
&[4^2, 7] && 105 & 3375 & 29385 & 87510 & 84975 & 19575 & 300 \\
&[4, 5, 6] && 210 & 7185 & 67260 & 221310 & 252090 & 79575 & 3000 \\
&[5^3] && 35 & 1240 & 12060 & 41820 & 51565 & 18540 & 866 \\[3pt]

&[2^3, 9] && 455 & 8610 & 36190 & 29820 & 0 & 0   & 0   \\
&[2^2, 3, 8] && 1365 & 33705 & 208320 & 336210 & 96075  & 0  & 0 \\
&[2^2, 4, 7] && 1365 & 38955 & 293670 & 644535 & 347175 & 25650  & 0 \\
&[2^2, 5, 6] && 1365 & 41580 & 342000 & 853500 & 579405 & 74040  & 0 \\
&[2, 3^2, 7] && 1365 & 42105 & 344595 & 853035 & 522450 & 38250  & 0 \\
&[2, 3, 4, 6] &&   2730 & 94290 & 886620 & 2671710 & 2296950 & 354000  & 0 \\
&[2, 3, 5^2] &&  1365 & 48825 & 479040 & 1530360 & 1449285 & 274905  & 0  \\
&[2, 4^2, 5] && 
           1365 & 51240 & 529425 & 1816410 & 1913445 & 417840  & 0  \\
&[3^3, 6] && 455 & 16905 & 171635 & 570805 & 550950 & 90650  & 0  \\
&[3^2, 4, 5] && 
           1365 & 55020 & 612705 & 2303715 & 2692020 & 641475   & 0  \\
&[3, 4^3] && 455 & 19215 & 224925 & 902400 & 1158840 & 321790  & 0 \\[3pt]

&[2^4, 7] && 1365 & 35250 & 219660 & 337050 & 82350  & 0   & 0 \\
&[2^3, 3, 6] && 
           5460 & 172350 & 1398740 & 3159450 & 1515700 & 54600  & 0  \\
&[2^3, 4, 5] && 
           5460 & 188025 & 1708035 & 4524870 & 2847420 & 185640  & 0 \\
&[2^ 2, 3^2, 5] && 
           8190 & 304335 & 2994180 & 8823360 & 6327465 & 461370  & 0  \\
&[2^2, 3, 4^2] && 
           8190 & 319605 & 3329865 & 10604790 & 8605395 & 780780  & 0  \\
&[2, 3^3, 4] && 
           5460 & 229365 & 2581215 & 9060870 & 8289600 & 854490  & 0 \\
&[3^5] && 273 & 12330 & 149674 & 576660 & 593303 & 69160 & 0  \\[3pt]

&[2^5, 5] && 3003 & 97140 & 761880 & 1493520 & 482292  & 0  & 0 \\
&[2^4 3, 4] && 15015 & 554250 & 5104260 & 12450900 & 5524200  & 0 & 0  \\
&[2^3, 3^3] && 10010 & 399660 & 4013730 & 10965465 & 5632135 & 0 & 0  \\[3pt]

&[2^6, 3] && 
             5005 & 180180 & 1471470 & 2622620 & 450450 & 0 & 0    \\
            \end{tabular}}
\end{center}
  \end{table}

\newpage
The following table can be obtained  either directly, by generating for each $n$ all partitions with a fixed number of parts, then calculating their genus, or, indirectly, by summing appropriate rows of the table of the genus-dependent Fa\`a di Bruno coefficients, while taking into account singletons (since the following is a table for $S_{n,k}^{(g)}$, not for $\widehat S_{n,k}^{(g)}$).
For each value of $n$ the number of parts, $k$  appears vertically, and the genus $g=0,1,\ldots$ increases in each row.
\begin{center}
\scalebox{0.55}{
\begin{tabular}{C|C|C|C|C|C|C|C|C}
\begin{tabular}{CC}
 1 &  1  \\
\end{tabular}
 &
\begin{tabular}{CC}
 1 &  1  \\
 2 &  1  \\
\end{tabular}
 &
\begin{tabular}{CC}
 1 &  1  \\
 2 &  3  \\
 3 &  1  \\
\end{tabular}
 &
\begin{tabular}{CC}
 1 &  1  \\
 2 &  6,1  \\
 3 &  6  \\
 4 &  1  \\
\end{tabular}
 &
\begin{tabular}{CC}
 1 &  1  \\
 2 &  10,5  \\
 3 &  20,5  \\
 4 &  10  \\
 5 &  1  \\
\end{tabular}
 &
\begin{tabular}{CC}
 1 &  1  \\
 2 &  15,15,1  \\
 3 &  50,40  \\
 4 &  50,15  \\
 5 &  15  \\
 6 &  1  \\
\end{tabular}
& 
\begin{tabular}{CC}
 1 &  1  \\
 2 &  21,35,7  \\
 3 &  105,175,21  \\
 4 &  175,175  \\
 5 &  105,35  \\
 6 &  21  \\
 7 &  1  \\
\end{tabular}
 &
\begin{tabular}{CC}
 1 &  1  \\
 2 &  28,70,28,1  \\
 3 &  196,560,210  \\
 4 &  490,1050,161  \\
 5 &  490,560  \\
 6 &  196,70  \\
 7 &  28  \\
 8 &  1  \\
\end{tabular}
 &
\begin{tabular}{CC}
 1 &  1  \\
 2 &  36,126,84,9  \\
 3 &  336,1470,1134,85  \\
 4 &  1176,4410,2184  \\
 5 &  1764,4410,777  \\
 6 &  1176,1470  \\
 7 &  336,126  \\
 8 &  36  \\
 9 &  1  \\
\end{tabular}
\end{tabular}
}
\end{center}

\bigskip

\begin{center}
\scalebox{0.6}{
\begin{tabular}{C|C|C}
 
\begin{tabular}{CC}
 1 &  1  \\
 2 &  45,210,210,45,1  \\
 3 &  540,3360,4410,1020  \\
 4 &  2520,14700,15330,1555  \\
 5 &  5292,23520,13713  \\
 6 &  5292,14700,2835  \\
 7 &  2520,3360  \\
 8 &  540,210  \\
 9 &  45  \\
 10 &  1  \\
\end{tabular}
 & 
\begin{tabular}{CC}
 1 &  1  \\
 2 &  55,330,462,165,11  \\
 3 &  825,6930,13860,6545,341  \\
 4 &  4950,41580,75075,24145  \\
 5 &  13860,97020,121275,14575  \\
 6 &  19404,97020,63063  \\
 7 &  13860,41580,8547  \\
 8 &  4950,6930  \\
 9 &  825,330  \\
 10 &  55  \\
 11 &  1  \\
\end{tabular}
 &
\begin{tabular}{CC}
 1 &  1  \\
 2 &  66,495,924,495,66,1  \\
 3 &  1210,13200,37422,29920,4774  \\
 4 &  9075,103950,289905,194150,14421  \\
 5 &  32670,332640,729960,284130  \\
 6 &  60984,485100,685608,91960  \\
 7 &  60984,332640,233772  \\
 8 &  32670,103950,22407  \\
 9 &  9075,13200  \\
 10 &  1210,495  \\
 11 &  66  \\
 12 &  1  \\
\end{tabular}
 \end{tabular}
}
\end{center}

\begin{center}
\scalebox{0.55}{
\begin{tabular}{C|C|C}
\begin{tabular}{CC}
 1 &  1  \\
 2 &  78,715,1716,1287,286,13  \\
 3 &  1716,23595,90090,109395,35464,1365  \\
 4 &  15730,235950,942942,1085370,252538  \\
 5 &  70785,990990,3396393,2797080,253253  \\
 6 &  169884,1981980,4972968,2196480  \\
 7 &  226512,1981980,3063060,443872  \\
 8 &  169884,990990,738738  \\
 9 &  70785,235950,52767  \\
 10 &  15730,23595  \\
 11 &  1716,715  \\
 12 &  78  \\
 13 &  1  \\
\end{tabular}
 & 
\begin{tabular}{CC}
 1 & 1 \\
 2 & 91,1001,3003,3003,1001,91,1 \\
 3 & 2366,40040,198198,340340,186186,21840 \\
 4 & 26026,495495,2690688,4759755,2288286,131495 \\
 5 & 143143,2642640,13096083,18768750,5424419 \\
 6 & 429429,6936930,27432405,25965940,2671669 \\
 7 & 736164,9513504,26342316,12737296 \\
 8 & 736164,6936930,11477466,1761760 \\
 9 & 429429,2642640,2063061 \\
 10 & 143143,495495,114114 \\
 11 & 26026,40040 \\
 12 & 2366,1001 \\
 13 & 91 \\
 14 & 1 \\
\end{tabular}
 \end{tabular}
 }
 \end{center}
\bigskip
\begin{center} 
\scalebox{0.55}{
\begin{tabular}{CC}
 1 &  1  \\
 2 &  105,1365,5005,6435,3003,455,15  \\
 3 &  3185,65065,405405,935935,775775,184275,5461  \\
 4 &  41405,975975,6921915,17482465,14369355,2564835  \\
 5 &  273273,6441435,43702659,97222125,58891833,4235595  \\
 6 &  1002001,21471450,123708585,205865660,68645577  \\
 7 &  2147145,38648610,169954785,177957780,20033013  \\
 8 &  2760615,38648610,115450335,59768280  \\
 9 &  2147145,21471450,37492455,6017440  \\
 10 &  1002001,6441435,5219214  \\
 11 &  273273,975975,230230  \\
 12 &  41405,65065  \\
 13 &  3185,1365  \\
 14 &  105  \\
 15 &  1  \\
\end{tabular}
}
\end{center}
\begin{center}
{\bf Table of $S_{n,k}^{(g)}$, from $n=1$ to $n=15$.}
\end{center}

The following table can be obtained  either directly, by generating, for each $n$, all partitions without singletons, with a fixed number of parts, then calculating their genus, or, indirectly, by summing appropriate rows of the table of the genus-dependent Fa\`a di Bruno coefficients (without singletons).
For each value of $n$ the number of parts, $k$  appears vertically, and the genus $g=0,1,\ldots$ increases in each row. Warning: the table starts at $n=2$.
\begin{center} 
\scalebox{0.6}{
\begin{tabular}{C|C|C|C|C|C|C|C}
n=2 & n=3 & n=4 & n=5 & n=6 & n=7 & n=8 & n=9 \\
\begin{tabular}{CC}
 1 &  1  \\
\end{tabular}
&
\begin{tabular}{CC}
 1 &  1  \\
\end{tabular}
&
\begin{tabular}{CC}
 1 &  1  \\
 2 &  2,1  \\
\end{tabular}
&
\begin{tabular}{CC}
 1 &  1  \\
 2 &  5,5  \\
\end{tabular}
&
\begin{tabular}{CC}
 1 &  1  \\
 2 &  9,15,1  \\
 3 &  5,10  \\
\end{tabular}
&
\begin{tabular}{CC}
 1 &  1  \\
 2 &  14,35,7  \\
 3 &  21,70,14  \\
\end{tabular}
&
\begin{tabular}{CC}
 1 &  1  \\
 2 &  20,70,28,1  \\
 3 &  56,280,154  \\
 4 &  14,70,21  \\
\end{tabular}
&
\begin{tabular}{CC}
 1 &  1  \\
 2 &  27,126,84,9  \\
 3 &  120,840,882,76  \\
 4 &  84,630,546  \\
\end{tabular}
\end{tabular}
}
\end{center} 

\begin{center} 
\scalebox{0.6}{
\begin{tabular}{C|C|C|C}
n=10 & n=11 & n=12   \\
\begin{tabular}{CC}
 1 &  1  \\
 2 &  35,210,210,45,1  \\
 3 &  225,2100,3570,930  \\
 4 &  300,3150,5250,750  \\
 5 &  42,420,483  \\
\end{tabular}
&
\begin{tabular}{CC}
 1 &  1  \\
 2 &  44,330,462,165,11  \\
 3 &  385,4620,11550,6050,330  \\
 4 &  825,11550,31185,13420  \\
 5 &  330,4620,10395,1980  \\
\end{tabular}
&
\begin{tabular}{CC}
 1 &  1  \\
 2 &  54,495,924,495,66,1  \\
 3 &  616,9240,31878,27940,4642  \\
 4 &  1925,34650,137445,118580,10395  \\
 5 &  1485,27720,101640,59730  \\
 6 &  132,2310,6468,1485  \\
\end{tabular}
\end{tabular}
}
\end{center} 

\bigskip
\begin{center} 
\scalebox{0.6}{
\begin{tabular}{C|C}
n=13 & n=14 \\
\begin{tabular}{CC}
 1 &  1  \\
 2 &  65,715,1716,1287,286,13  \\
 3 &  936,17160,78078,102960,34606,1352  \\
 4 &  4004,90090,492492,709280,191334  \\
 5 &  5005,120120,648648,770770,92092  \\
 6 &  1287,30030,138138,100815  \\
\end{tabular}
&
\begin{tabular}{CC}
 1 &  1  \\
 2 &  77,1001,3003,3003,1001,91,1  \\
 3 &  1365,30030,174174,322322,182182,21658  \\
 4 &  7644,210210,1513512,3273270,1797796,112476  \\
 5 &  14014,420420,3132129,6236230,2319317  \\
 6 &  7007,210210,1429428,2137135,315315  \\
 7 &  429,12012,66066,56628  \\
\end{tabular}
\end{tabular}
}
\end{center} 
\bigskip
\begin{center} 
\scalebox{0.6}{
\begin{tabular}{C}
n=15 \\
\begin{tabular}{CC}
 1 &  1  \\
 2 &  90,1365,5005,6435,3003,455,15  \\
 3 &  1925,50050,360360,890890,760760,182910,5446  \\
 4 &  13650,450450,4129125,12512500,11606595,2238600  \\
 5 &  34398,1261260,12381369,37087050,28261233,2406040  \\
 6 &  28028,1051050,9879870,24909885,11638627  \\
 7 &  5005,180180,1471470,2622620,450450  \\
\end{tabular}
\end{tabular}
}
\end{center} 
\begin{center}
{\bf Table of $\widehat S_{n,k}^{(g)}$, from $n=2$ to $n=15$.}
\end{center}

\section{Acknowledgments}
    After the present article was made available as a preprint (its version $1$ on arXiv) we received several comments from A. Hock suggesting that several partitions functions obtained in the present paper could be re-written by using changes of functions suggested by the formalism of topological recursion. This is in particular so for $g=1$ in the case of partitions into {three parts where expressions (\ref{threeparts1})} presented as conjectures in the first version of our paper could be proved by Hock by using this method{; his work has now been published  \cite{Hock}.}

    \smallskip
    It is also a pleasure to thank G. Chapuy, P. Di Francesco, and E. Garcia-Failde for stimulating discussions.

    \bigskip
    \hrule
    \bigskip

    \noindent 2020 {\it Mathematics Subject Classification}:
    Primary 05A18; Secondary 05A15,  15B52.

    \noindent \emph{Keywords: }  set-partition, genus-dependent enumeration.

    \bigskip
    \hrule
    \bigskip

    \noindent (Concerned with sequences
    \seqnum{A000108},
    \seqnum{A000110},
    \seqnum{A000292},
    \seqnum{A000296},
    \seqnum{A000581},
    \seqnum{A001263},
    \seqnum{A001287},
    \seqnum{A001700},
    \seqnum{A002411},
    \seqnum{A002450},
    \seqnum{A002802},
    \seqnum{A005043},
    \seqnum{A008277},
    \seqnum{A008299},
    \seqnum{A025035},
    \seqnum{A025036},
    \seqnum{A025037},
    \seqnum{A025038},
    \seqnum{A025039},
    \seqnum{A035319},
    \seqnum{A059260},
    \seqnum{A103371},
    \seqnum{A108263},
    \seqnum{A134991},
    \seqnum{A144431},
    \seqnum{A185259},
    \seqnum{A245551},
    \seqnum{A275514},
    \seqnum{A297178},
    \seqnum{A297179}, and
    \seqnum{A340556}.)

    \bigskip
    \hrule
    \bigskip

    \vspace*{+.1in}
    \noindent
    Received  June 29 2023;
    revised versions received  July 1 2023;
    January 15 2024; January 31 2024; February 2 2024; 
    February 4 2024.
    Published in {\it Journal of Integer Sequences},
    February 5 2024.

    \bigskip
    \hrule
    \bigskip

    \noindent
    Return to
   \href{https://cs.uwaterloo.ca/journals/JIS/}{Journal of Integer Sequences home page}.

    \vskip .1in
\end{document}